\documentclass[a4paper,12pt]{article}

\usepackage{amsmath,amsfonts,amssymb}
\usepackage{amsthm}
\usepackage{color,multicol}

\newtheorem{lemma}{Lemma}[section]
\newtheorem{theorem}[lemma]{Theorem}

\newtheorem{definition}[lemma]{Definition}

\parindent=0pt

\begin{document}

\title{On estimates for the discrete eigenvalues of two-dimensional quantum waveguides}

\author{Martin Karuhanga\footnote{Department of Mathematics, Mbarara University of Science and Technology, Mbarara, Uganda. E-mail: \ mkaruhanga@must.ac.ug, ORCID: 0000-0002-7254-9073}\; and Catherine Ashabahebwa\footnote{Department of Mathematics, Kabale University, Kabale, Uganda. E-mail: cashabahebwa@kab.ac.ug, ORCID: 0009-0009-5298-2337}}

\date{}

\maketitle

\begin{abstract}
In this paper, we give upper estimates for the number and sum of discrete eigenvalues below the bottom of the essential spectrum  counting multiplicities of quantum waveguides in two dimensions.  We consider both straight and curved waveguides of constant width, and the estimates are presented in terms of norms of the potential. For the curved quantum waveguide, we assume that the waveguide is not self-intersecting and its curvature is a continuous and bounded function on $\mathbb{R}$. The estimates are new, particularly for the case of curved quantum waveguides.
\end{abstract}
\noindent
{\bf Keywords}: Estimates; Negative eigenvalues; quantum waveguides; Orlicz norms\\\\
{\bf  Mathematics Subject Classification} (2020): 35P05, 47F05

\section{Introduction}
Given a non-negative function $V \in L^1_{\textrm{loc}}(\mathbb{R}^n)$, consider the self-adjoint operator on $L^2(\mathbb{R}^n)$
\begin{equation}\label{1}
 -\Delta - V, \;\;\;\;\;\;\;\; V \geq 0.
\end{equation}
The essential spectrum of \eqref{1} is the interval $[0, \infty)$ and its discrete spectrum consists of isolated negative eigenvalues of finite multiplicity \cite{GT}. A problem of physical interest is to obtain estimates for the number of these eigenvalues counting their multiplicities, and their sum in terms of regularity properties of the potential. For three and higher dimensions, an upper estimate for the number of negative eigenvalues is given by the well known Cwikel-Lieb-Rozenblum (CLR) inequality but this inequality is known to fail in two dimensions \cite{BE,Roz}. So far, the best estimates of the CLR type in two dimensions involve two terms; one in terms of the weighted $L^1$ norm and the other is given in terms of the Orlicz norms of the potential \cite{MK,Eugene, Sol}. Results similar to the CLR inequality for Schr\"{o}dinger operators with magetic field can be found for example in \cite{Bal, Sor}. The Lieb-Thirring (LT) inequalities give upper estimates for the sum of negative eigenvalues of \eqref{1} and they are known to have useful applications in the study of stability of matter \cite{Ed, Rup}. In the present paper, we present CLR and LT type inequalities for a straight and curved quantum waveguide in two dimensions with mixed Dirichlet and Neumann boundary conditions. Similar estimates of the CLR type have been presented in \cite{Kar1}. For studies relating to the existence of eigenvalues below the bottom of the essential spectrum for quantum waveguides, see for example \cite{Borr,ED,ES,Gol,Jil}. The results in this paper provide new estimates, particularly, in the case of curved quantum waveguides, and they have potential extensions to different configurations such as waveguides with local deformations by introducing impurities modelled by a Dirac perturbation \cite{EGST,EN,Kar1,MK}, waveguides with magnetic fields \cite{Bal,Sor} and waveguides coupled by a window \cite{Boris, EV}. \\\\
The operator studied in this paper is described below as follows:\\\\ Let $\Omega :=\mathbb{R}\times (0,d), d>0$ be an infinite two-dimensional strip and $V:\mathbb{R}^2\longrightarrow\mathbb{R}$ be a nonnegative  function integrable on bounded subsets of $\Omega$. We consider the motion of a quantum particle in $\Omega$ modelled by the operator
\begin{equation}\label{qp}
H:= -\Delta - V,\, V\ge 0
\end{equation}
on the Hilbert space $L^2(\Omega)$ subject to the boundary conditions
\begin{equation}\label{bcs}
u(x,0) = 0, \,\, \frac{\partial u}{\partial \textbf{n}}(x,d)=0,
\end{equation}
where $\textbf{n}$ denotes the outward normal unit vector. Because of the presence of the boundary conditions in \eqref{bcs}, the essential spectrum of the operator in \eqref{qp} is the interval $[\frac{\pi^2}{4d^2}, \infty)$. Therefore to estimate the number and sum of the eigenvalues below the bottom of the essential spectrum of \eqref{qp}, it is sufficient to obtain estimates for the number and sum of negative eigenvalues of the shifted operator
\begin{equation}\label{wg}
Q =-\Delta -\frac{\pi^2}{4d^2} - V,\, V\ge 0
\end{equation}
on $L^2(\Omega)$ with boundary conditions in \eqref{bcs}.\\\\
In obtaning estimates of the CLR type, the operator is decomposed into two independent operators. The first is defined by restricting the quadratic form associated with \eqref{wg} to the space of functions of the form $w(x)\sin\frac{\pi}{2d}y$ where $\sin\frac{\pi}{2d}y$ is the eigenfunction corresponding to the first eigenvalue $\lambda_1 = \frac{\pi^2}{4d^2}$ of the one-dimensional Laplacian on $L^2(0, d)$ subject to boundary conditions $u(0) = 0,\,\, \frac{du}{dy}(d) = 0$. The second one is defined by the class of functions orthogonal to $\sin\frac{\pi}{2d}y$ in the $L^2(0,d)$ inner product.

\section{Preliminaries}
\begin{definition}
{\em Let $(\Omega, \Sigma, \mu)$ be a measure space and let $\psi:[0,\infty)\longrightarrow[0,\infty)$ be a nondecreasing function. The class of measurable functions $f:\Omega\longrightarrow\mathbb{C}$ such that
$$
\int_{\Omega}\psi(|f(t)|)\,d\mu < \infty
$$
is called the Orcliz class $K_{\psi}(\Omega)$. The Orlicz space $L_{\psi}(\Omega)$ is the linear span of $K_{\psi}(\Omega)$. }
\end{definition}
\begin{definition}
{\em A continuous nondecreasing convex function  $\psi:[0,\infty)\longrightarrow[0,\infty)$ is called an $N$-function if 
$$
\underset{t\longrightarrow 0^+}\lim\frac{\psi(t)}{t} = 0 \mbox{ and } \underset{t\longrightarrow \infty}\lim\frac{\psi(t)}{t} = \infty.
$$
The function $\phi:[0,\infty)\longrightarrow[0,\infty)$  given by $\phi(t) = \underset{s\ge 0}\sup(st-\psi(s))$ is called complementary to $\psi$.}
\end{definition}
Let $\phi$ and $\psi$ be mutually complementary $N$-functions and let $L_{\phi}(\Omega)$ and $L_{\psi}(\Omega)$ be the corresponding Orlicz spaces. Then the following norms are equivalent on $L_{\psi}(\Omega)$.
$$
\|f\|_{\psi,\Omega} = \sup\left\{
\left|\int_{\Omega}fgd\mu\right|\,:\,\int_{\Omega}\phi gd\mu \le 1\right\}
$$
and 
$$
\|f\|_{(\psi,\Omega)} = \inf\left\{
\kappa\,:\,\int_{\Omega}\phi\left(\frac{f}{\kappa}\right)d\mu \le 1\right\}.
$$
If $\mu(\Omega) <\infty$, the following norm is equivalent to the norm on $L_{\psi}(\Omega)$\cite{Sol}.
\begin{equation}\label{equi}
\|f\|^{(av)}_{\psi,\Omega} =\sup\left\{
\left|\int_{\Omega}fgd\mu\right|\,:\,\int_{\Omega}\phi( g)d\mu \le \mu(\Omega)\right\}. 
\end{equation}
For our purpose, we shall use the following pair of $N$-complementary functions
\begin{equation}\label{N}
\phi(s) = e^{|s|}-1-|s|,\,\, \psi(t)= (1+|t|)\ln(1+|t|)-|t|,\,s,t\in\mathbb{R}.
\end{equation}
\begin{definition}
{\em  Let $I_1,I_2\subset\mathbb{R}$ be nonempty intervals. We shall define the space $L_1(I_1, L_{\psi}(I_2))$ to be the space of functions $f:I_1\times I_2\longrightarrow\mathbb{R}$ such that
\begin{equation}\label{norm}
\|f\|_{L_1(I_1, L_\psi(I_2))} := \int_{I_1}\|f\|^{(av)}_{\psi,I_2}dx < \infty.
\end{equation}}
\end{definition}

Let $\mathcal{H}$ be a Hilbert space and $q$ be a sesquilinear form with domain $D(q)\subset\mathcal{H}$. Let 
\begin{equation}\label{number}
N_-(q) = \sup\{\mbox{ dim } M\,:\,q[u]\le 0\, \forall u\in M\setminus{0}\},
\end{equation}
where $M$ is a linear space of $D(q)$. If $q$ is the quadratic form of a lower semi-bounded self-adjoint operator $A$ with no essential spectrum in the interval $(-\infty, 0)$, then \eqref{number} gives the number of negative eigenvalues of $A$ counting their multiplicities \cite{BirSol}.

\section{Estimates for straight quantum waveguides}
The study of eigenvalues properties of straight quantum waveguides with mixed boundary conditions has been extensive in recent years, see for example \cite{Ditt, Jil}. The presence of boundary conditions produces isolated eigenvalues below the bottom of the essential spectrum, therefore providing estimates for their number is essential for physical applications. In this section, we determine upper estimates in presence of Dirichlet and Neumann boundary conditions. Similar estimates can be found for example in \cite{Grig,Kar1}.\\\\
The quadratic form associated with \eqref{wg}  is given by
\begin{equation}\label{quad}
q[u] = \int_{\Omega}(|\nabla u|^2 -\frac{\pi^2}{4d^2}|u|^2)dxdy - \int_{\Omega}V|u|^2dxdy
\end{equation} with domain
$$
D(q) = \{u\in W^1_2(\Omega, dxdy)\,:\,u(x, 0) =0 ,\,\frac{\partial u}{\partial \textbf{n}}(x,d)=0\}\cap L^2(\Omega, Vdxdy).
$$
Let $\mathcal{H} := \{u\in W^1_2(\Omega, dxdy)\,:\,u(x, 0) =0 ,\,\frac{\partial u}{\partial \textbf{n}}(x,d)=0\}$ and define\\ $\mathcal{H}_1 = P\mathcal{H}$ and $\mathcal{H}_2 =(I- P)\mathcal{H}$, where
\begin{equation}\label{proj}
Pu(x,y) = \left(\frac{2}{d}\int_0^du(x,y)\sin\frac{\pi}{2d}y dy\right)\sin\frac{\pi}{2d}y = h(x)\sin\frac{\pi}{2d}y,\, \forall u\in\mathcal{H},
\end{equation} where
$$
h(x):= \frac{2}{d}\int_0^du(x,y)\sin\frac{\pi}{2d}y dy.
$$
Then $P$ is a projection. Below we give auxilliary results necessary for the proof of the main result of this section.\\\\
\begin{lemma}\label{lemma1}
 Let $\oplus$ denote the direct orthogonal sum. Then $\mathcal{H} = \mathcal{H}_1\oplus\mathcal{H}_2$.
\end{lemma}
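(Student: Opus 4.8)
The plan is to treat $P$ as the orthogonal projection acting fibrewise in the transverse variable: for each fixed $x$, $Pu(x,\cdot)$ is the $L^2(0,d)$-orthogonal projection of $u(x,\cdot)$ onto the one-dimensional span of $e(y):=\sin\frac{\pi}{2d}y$, the first transverse eigenfunction. Since $\|e\|_{L^2(0,d)}^2=d/2$, the coefficient $h(x)=\frac{2}{d}\int_0^d u(x,y)e(y)\,dy$ appearing in \eqref{proj} is exactly the Fourier coefficient of $u(x,\cdot)$ against the normalised eigenfunction, so that $P^2=P$ holds fibrewise and $P$ is self-adjoint on $L^2(\Omega)$. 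To establish $\mathcal{H}=\mathcal{H}_1\oplus\mathcal{H}_2$ I would verify three things: (i) both $\mathcal{H}_1=P\mathcal{H}$ and $\mathcal{H}_2=(I-P)\mathcal{H}$ are contained in $\mathcal{H}$; (ii) every $u\in\mathcal{H}$ splits as $u=Pu+(I-P)u$ with the two summands lying in $\mathcal{H}_1$ and $\mathcal{H}_2$, giving $\mathcal{H}=\mathcal{H}_1+\mathcal{H}_2$; and (iii) the sum is orthogonal in the inner product of $\mathcal{H}$.

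For (i) I would first show $Pu\in\mathcal{H}$. The boundary conditions \eqref{bcs} are immediate from the product structure $Pu=h(x)e(y)$: since $e(0)=\sin 0=0$ we get $Pu(x,0)=0$, and since $\partial_y e(d)=\frac{\pi}{2d}\cos\frac{\pi}{2}=0$ we get $\frac{\partial}{\partial\mathbf{n}}Pu(x,d)=0$. The regularity $Pu\in W^1_2(\Omega)$ reduces to $h\in W^1_2(\mathbb{R})$, which follows from $u\in W^1_2(\Omega)$ by differentiating the defining integral under the integral sign (justified since $\partial_x u\in L^2(\Omega)$, with Cauchy--Schwarz on the bounded fibre $(0,d)$), yielding $h'(x)=\frac{2}{d}\int_0^d\partial_x u(x,y)e(y)\,dy$ and the estimate $\|h\|_{W^1_2(\mathbb{R})}\le C\|u\|_{W^1_2(\Omega)}$. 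As $\mathcal{H}$ is a linear space and $u,Pu\in\mathcal{H}$, also $(I-P)u=u-Pu\in\mathcal{H}$, which settles (i) and, together with $u=Pu+(I-P)u$, also (ii).

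The main work is (iii), the orthogonality of $\mathcal{H}_1$ and $\mathcal{H}_2$ in the $W^1_2(\Omega)$ inner product. Writing $g:=(I-P)v$, the defining property is that $\int_0^d g(x,y)e(y)\,dy=0$ for a.e. $x$; differentiating this identity in $x$ shows that $\partial_x g(x,\cdot)$ is also $L^2(0,d)$-orthogonal to $e$. The $L^2(\Omega)$ cross-term and the $\partial_x$ cross-term then vanish directly, since for each $x$ they factor through $\int_0^d e(y)\,\overline{g(x,y)}\,dy$ and $\int_0^d e(y)\,\overline{\partial_x g(x,y)}\,dy$, both zero. The genuine obstacle is the transverse cross-term $\int_\Omega\partial_y(Pu)\,\overline{\partial_y g}$, because $\partial_y(Pu)=h(x)\frac{\pi}{2d}\cos\frac{\pi}{2d}y$ involves $\cos$, which is not pointwise orthogonal to $\partial_y g$. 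Here I would integrate by parts in $y$: the boundary term at $y=d$ vanishes because $\cos\frac{\pi}{2}=0$, and at $y=0$ because $g(x,0)=0$ (the Dirichlet condition inherited by $g\in\mathcal{H}$), while the remaining integral is again a multiple of $\int_0^d\sin\frac{\pi}{2d}y\,\overline{g(x,y)}\,dy=0$. Thus all three cross-terms vanish, $\langle Pu,(I-P)v\rangle_{W^1_2(\Omega)}=0$, and combining (i)--(iii) yields $\mathcal{H}=\mathcal{H}_1\oplus\mathcal{H}_2$. The delicate point to handle with care is precisely this integration by parts, as it is the only place where both boundary conditions in \eqref{bcs} are genuinely used.
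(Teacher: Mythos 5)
Your proposal is correct and follows essentially the same route as the paper: the $L^2$ and $\partial_x$ cross-terms vanish directly from the vanishing of the transverse Fourier coefficient of $(I-P)u$, while the $\partial_y$ cross-term is handled by integration by parts in $y$, with the boundary terms killed by $\cos\frac{\pi}{2}=0$ at $y=d$ and the Dirichlet condition at $y=0$. In fact your write-up is somewhat more complete than the paper's, which only verifies the orthogonality and leaves the invariance $P\mathcal{H}\subset\mathcal{H}$, the regularity $h\in W^1_2(\mathbb{R})$, and the splitting $u=Pu+(I-P)u$ implicit.
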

\begin{proof}
We need to show that $\langle f, g\rangle =0$ and $\langle \nabla f,\nabla g\rangle =0$ in $L^2(\Omega)$ for all $f\in\mathcal{H}_1$ and for all $g\in\mathcal{H}_2$.
\begin{eqnarray*}
 \langle f, g\rangle &=& \int_{\Omega}h(x)\sin\frac{\pi}{2d}y(I-P)\overline{u(x,y)}\,dxdy\\ &=& \int_{\mathbb{R}}h(x)\left(\int_0^d\overline{u(x,y)}\sin\frac{\pi}{2d}y\,dy-\left(\frac{2}{d}\int_0^d\overline{u(x,y)}\sin\frac{\pi}{2d}y\,dy\right)\int_0^d\sin^2\frac{\pi}{2d}y\,dy
 \right)\,dx\\ &=& 0.\\
\langle \nabla_x f, \nabla_x g\rangle &=& \int_{\Omega}\partial_x h(x)\sin\frac{\pi}{2d}y(I-P)\partial_x\overline{u(x,y)}\,dxdy\\
&=&\int_{\mathbb{R}}\partial_x h(x)dx\int_0^d\partial_x\overline{u(x,y)}\sin\frac{\pi}{2d}y\,dy\\ &-&\left(\frac{2}{d}\int_0^d\partial_x\overline{u(x,y)}\sin\frac{\pi}{2d}y\,dy\right)\int_0^d\sin^2\frac{\pi}{2d}y\,dy
 \\ &=&0.
\end{eqnarray*}
Similarly, using integration by parts and applying the boundary conditions in \eqref{bcs}, one gets $\langle \nabla_y f, \nabla_y g\rangle = 0$.
\end{proof}
\begin{lemma}\label{lemma2}
 The subspace $\mathcal{H}_2$ is orthogonal to $\sin\frac{\pi}{2d}y$ in the $L^2((0,d))$ inner product.
\end{lemma}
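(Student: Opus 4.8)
The plan is to take an arbitrary element $g \in \mathcal{H}_2$, write it as $g = (I-P)u$ for some $u \in \mathcal{H}$, and verify directly that its inner product with $\sin\frac{\pi}{2d}y$ in $L^2((0,d))$ vanishes for (almost) every fixed $x$. By the definition of $P$ in \eqref{proj}, we have $g(x,y) = u(x,y) - h(x)\sin\frac{\pi}{2d}y$ with $h(x) = \frac{2}{d}\int_0^d u(x,y)\sin\frac{\pi}{2d}y\,dy$, so the quantity to be computed is
$$
\int_0^d g(x,y)\sin\tfrac{\pi}{2d}y\,dy = \int_0^d u(x,y)\sin\tfrac{\pi}{2d}y\,dy - h(x)\int_0^d \sin^2\tfrac{\pi}{2d}y\,dy .
$$

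The one genuine computation is the normalization integral $\int_0^d \sin^2\frac{\pi}{2d}y\,dy$. Using the identity $\sin^2\theta = \tfrac12(1-\cos 2\theta)$ together with $\int_0^d \cos\frac{\pi}{d}y\,dy = \frac{d}{\pi}\sin\pi = 0$, this evaluates to $d/2$. This is precisely the value that makes the factor $2/d$ in the definition of $h(x)$ the correct normalization, and it is the same fact that makes $P$ idempotent, as already exploited in the proof of Lemma \ref{lemma1}.

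Substituting $\int_0^d \sin^2\frac{\pi}{2d}y\,dy = d/2$ and the definition of $h(x)$ into the display above, the second term becomes $h(x)\cdot\frac{d}{2} = \int_0^d u(x,y)\sin\frac{\pi}{2d}y\,dy$, which exactly cancels the first term, so the inner product is zero and the orthogonality claim follows. I expect no real obstacle here: the result is a direct consequence of the normalization built into the projection $P$, and the only point requiring care is correctly evaluating the normalizing integral and tracking the factor $2/d$ so that the two terms cancel.
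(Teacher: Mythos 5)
Your proof is correct and follows essentially the same route as the paper: expand $g=(I-P)u$, integrate against $\sin\frac{\pi}{2d}y$ over $(0,d)$, and observe that the normalization $\int_0^d\sin^2\frac{\pi}{2d}y\,dy=\frac{d}{2}$ makes the two terms cancel. The only difference is that you evaluate the normalizing integral explicitly, which the paper leaves implicit.
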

\begin{proof}
For all $g\in\mathcal{H}_2$, we have
\begin{eqnarray*}
\langle g, \sin\frac{\pi}{2d}y\rangle &=& \int_{0}^d(I-P)u(x,y)\sin\frac{\pi}{2d}y\,dy\\
&=&\int_0^du(x,y)\sin\frac{\pi}{2d}y\,dy -\frac{2}{d}\int_0^d\sin^2\frac{\pi}{2d}ydy\int_0^du(x,y)\sin\frac{\pi}{2d}ydy\\ &=& 0.
\end{eqnarray*}
\end{proof}

It follows from lemma \ref{lemma1} that for all $u\in\mathcal{H}$, $u = v+w,\,v\in\mathcal{H}_1,\,w\in\mathcal{H}_2$. Thus, one has
$$
\int_{\Omega}(|\nabla u|^2-\frac{\pi^2}{4d^2}|u|^2)dxdy = \int_{\Omega}(|\nabla v|^2-\frac{\pi^2}{4d^2}|v|^2)dxdy + \int_{\Omega}(|\nabla w|^2-\frac{\pi^2}{4d^2}|w|^2)dxdy
$$
and
$$
\int_{\Omega}V|u|^2dxdy = \int_{\Omega}V|v+w|^2dxdy \le 2\int_{\Omega}V|v|^2dxdy + 2\int_{\Omega}V|w|^2dxdy.
$$
Hence the quadratic form in \eqref{quad} gives
\begin{eqnarray*}
q[u]&\ge& \int_{\Omega}(|\nabla v|^2-\frac{\pi^2}{4d^2}|v|^2)dxdy - 2\int_{\Omega}V|v|^2dxdy\\ &+& \int_{\Omega}(|\nabla w|^2-\frac{\pi^2}{4d^2}|w|^2)dxdy - 2\int_{\Omega}V|w|^2dxdy.
\end{eqnarray*}

Let 
$$
q_1[v]:= \int_{\Omega}(|\nabla v|^2-\frac{\pi^2}{4d^2}|v|^2)dxdy - 2\int_{\Omega}V|v|^2dxdy,
$$
$$
D(q_1) =  \{v\in\mathcal{H}_1\,:\,v(x, 0) =0 ,\,\frac{\partial v}{\partial \textbf{n}}(x,d)=0\}\cap L^2(\Omega, Vdxdy)
$$ and
$$
q_2[w]:= \int_{\Omega}(|\nabla w|^2-\frac{\pi^2}{4d^2}|w|^2)dxdy - 2\int_{\Omega}V|w|^2dxdy,
$$
$$
D(q_2) = \{v\in\mathcal{H}_2\,:\,w(x, 0) =0 ,\,\frac{\partial w}{\partial \textbf{n}}(x,d)=0\}\cap L^2(\Omega, Vdxdy).
$$
Then
\begin{equation}\label{ineq}
q[u]\ge q_1[v] + q_2[w].
\end{equation}

If $N_-(q)$ denotes the number of isolated eigenvalues in the interval $(-\infty, 0)$ of the operator \eqref{wg} counting their multiplicities, then 
\begin{equation}\label{Number1}
N_-(q) \le N_-(q_1) + N_-(q_2).
\end{equation}

Define a sequence of partitions $\{\Omega_k\}_{k\in\mathbb{N}}$ of $\Omega$ by $\Omega_k := (k, k+1)\times(0,d)$ and consider the following eigenvalue problem on $L^2(\Omega_k)$;
\begin{equation*}
\begin{cases}
-u'' = \lambda u\;\;\;\;\; & \textrm{in}\;\; \Omega_k,\\
u(x,0)= 0,\\
\frac{\partial u}{\partial \textbf{n}}(x,d)=0.
\end{cases}
\end{equation*}
Let $\lambda_1\le\lambda_2\le ...\le\lambda_n\le ...$ be the eigenvalues of the above boundary value problem. Then it is easy to find that $\lambda_1 = \frac{\pi^2}{4d^2}, \,\lambda_2 = \mbox{ min }\{\pi^2 + \frac{\pi^2}{4d^2}, \frac{9\pi^2}{4d^2}\} > \lambda_1 $ so that $\lambda_2 - \lambda_1 = \pi^2\mbox{ min }\{1, \frac{2}{d^2}\}$.
The Min-Max principle implies that for all $u\in W^1_2(\Omega_k)$ with $u\perp\sin\frac{\pi}{2d}y$, we have
$$
\lambda_2\int_{\Omega_k}|u|^2dxdy \le \int_{\Omega_k}|\nabla u|^2dxdy.
$$
This in turn implies
\begin{eqnarray*}
\int_{\Omega}(|\nabla u|^2-\lambda_1|u|^2)dxdy &=& \int_{\Omega}(|\nabla u|^2-\lambda_2|u|^2)dxdy +(\lambda_2 - \lambda_1)\int_{\Omega}| u|^2dxdy\\
&\ge&(\lambda_2 - \lambda_1)\int_{\Omega}| u|^2dxdy\\ &=& \pi^2\mbox{ min }\left\{1, \frac{2}{d^2}\right\}\int_{\Omega}| u|^2dxdy.
\end{eqnarray*}
Hence
\begin{eqnarray}\label{min}
\int_{\Omega}| u|^2dxdy &\le& \frac{1}{\pi^2}\mbox{ max }\left\{1, \frac{d^2}{2}\right\}\left(\int_{\Omega}(|\nabla u|^2-\frac{\pi^2}{4d^2}|u|^2)dxdy\right)\nonumber\\
&=&C_1\int_{\Omega}(|\nabla u|^2-\frac{\pi^2}{4d^2}|u|^2)dxdy,
\end{eqnarray}

for all $u\in W^1_2(\Omega_k)$ with $u\perp\sin\frac{\pi}{2d}y$, where $C_1 = \frac{1}{\pi^2}\mbox{ max }\left\{1, \frac{d^2}{2}\right\}$.

\begin{lemma}\label{lemma3}
There exists a constant $C_2>0$ such that
$$
\int_{\Omega_k}| u|^2dxdy \le C_2\left(\int_{\Omega_k}(|\nabla u|^2-\frac{\pi^2}{4d^2}|u|^2)dxdy\right)
$$
for all $u\in W^1_2(\Omega_k)$ with $u\perp\sin\frac{\pi}{2d}y$.
\end{lemma}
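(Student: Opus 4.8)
The plan is to read the claim off directly from the spectral gap of the mixed Dirichlet--Neumann Laplacian on the bounded cell $\Omega_k$, localising the computation carried out just before \eqref{min} to a single cell. Since $\Omega_k=(k,k+1)\times(0,d)$ is bounded, the operator $-\Delta$ on $\Omega_k$ with the Dirichlet condition at $y=0$, the Neumann condition at $y=d$, and natural (Neumann) conditions on the vertical sides $x=k,k+1$ has purely discrete spectrum. Separation of variables shows that its eigenfunctions are products of $\cos(m\pi(x-k))$, $m\ge 0$, with $\sin\frac{(2n-1)\pi}{2d}y$, $n\ge 1$, so the eigenvalues are $(m\pi)^2+\frac{(2n-1)^2\pi^2}{4d^2}$. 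Hence the two lowest are the simple $\lambda_1=\frac{\pi^2}{4d^2}$, with eigenfunction $\sin\frac{\pi}{2d}y$ (constant in $x$), and $\lambda_2=\min\{\pi^2+\frac{\pi^2}{4d^2},\frac{9\pi^2}{4d^2}\}$, so that $\lambda_2-\lambda_1=\pi^2\min\{1,\frac{2}{d^2}\}$, as already recorded above.

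The main step is then the Min--Max (Courant--Fischer) principle. Because $\lambda_1$ is simple with eigenfunction $\sin\frac{\pi}{2d}y$, the variational characterisation of $\lambda_2$ gives, for every $u$ in the form domain (that is, $u\in W^1_2(\Omega_k)$ with $u(x,0)=0$) that is orthogonal to $\sin\frac{\pi}{2d}y$ in $L^2(\Omega_k)$, the bound
$$\lambda_2\int_{\Omega_k}|u|^2\,dxdy\le\int_{\Omega_k}|\nabla u|^2\,dxdy.$$
Subtracting $\lambda_1\int_{\Omega_k}|u|^2\,dxdy$ from both sides and inserting the value of $\lambda_2-\lambda_1$ yields
$$\int_{\Omega_k}\Big(|\nabla u|^2-\frac{\pi^2}{4d^2}|u|^2\Big)\,dxdy\ge(\lambda_2-\lambda_1)\int_{\Omega_k}|u|^2\,dxdy=\pi^2\min\Big\{1,\frac{2}{d^2}\Big\}\int_{\Omega_k}|u|^2\,dxdy.$$
Dividing by $\lambda_2-\lambda_1$ gives the claim with $C_2=\frac{1}{\lambda_2-\lambda_1}=\frac{1}{\pi^2}\max\{1,\frac{d^2}{2}\}$, so that in fact $C_2=C_1$.

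I expect the only delicate point to be the justification of the Min--Max step, and specifically the verification that $\lambda_1$ is simple with eigenfunction $\sin\frac{\pi}{2d}y$: this is exactly what guarantees that orthogonality to the single function $\sin\frac{\pi}{2d}y$ is enough to push the Rayleigh quotient up to $\lambda_2$. Simplicity relies on the natural Neumann conditions on the vertical sides of $\Omega_k$, which make the lowest mode constant in $x$; the Dirichlet condition at $y=0$ (inherited from membership in $\mathcal{H}$, equivalently $\mathcal{H}_2$) is what fixes the correct form domain and hence the value $\lambda_1=\frac{\pi^2}{4d^2}$. Everything else is the elementary rearrangement above, so the lemma is really just the single-cell version of \eqref{min}.
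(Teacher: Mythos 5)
Your proof is correct for the statement as printed, and it is essentially the argument the paper itself uses to establish \eqref{min}: the displayed inequality of Lemma \ref{lemma3} is literally \eqref{min} on a single cell (the paper's \eqref{min} writes $\Omega$ where $\Omega_k$ is meant), and the paper obtains it exactly as you do, via the min--max principle for the mixed Dirichlet--Neumann problem on $\Omega_k$ and the spectral gap $\lambda_2-\lambda_1=\pi^2\min\{1,2/d^2\}$, giving $C_2=C_1$; your explicit separation-of-variables verification that $\lambda_1$ is simple is a welcome addition to what the paper dismisses as ``easy to find''. Be aware, however, that the paper's own proof of Lemma \ref{lemma3} establishes a \emph{different} inequality: starting from \eqref{min} it bounds the Dirichlet integral,
$$
\int_{\Omega_k}|\nabla u|^2\,dxdy \le \left(1+\frac{C_1\pi^2}{4d^2}\right)\int_{\Omega_k}\left(|\nabla u|^2-\frac{\pi^2}{4d^2}|u|^2\right)dxdy,
$$
and it is this gradient bound that the lemma is actually invoked for in the proof of Theorem \ref{resut1}, where Lemma \ref{lemma3} and \eqref{min} are combined to control $\int_{\Omega_k}(|\nabla w|^2+|w|^2)\,dxdy$ with $C_8=\max\{C_1,C_2\}$. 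In other words, the lemma's statement appears to carry a typo ($|u|^2$ on the left-hand side should be $|\nabla u|^2$), so your proof, being a re-derivation of \eqref{min}, does not by itself supply the inequality the paper needs downstream. The gap is harmless and closes in one line: write $\int_{\Omega_k}|\nabla u|^2 = \int_{\Omega_k}(|\nabla u|^2-\frac{\pi^2}{4d^2}|u|^2)+\frac{\pi^2}{4d^2}\int_{\Omega_k}|u|^2$ and apply the $L^2$ bound you proved to the second term; appending that remark would let your argument serve the role the lemma plays in Theorem \ref{resut1}.
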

\begin{proof}
It follows from \eqref{min} that for all $u\in W^1_2(\Omega_k)$ with $u\perp\sin\frac{\pi}{2d}y$
\begin{eqnarray*}
\int_{\Omega_k}(|\nabla u|^2 dxdy&=& \int_{\Omega_k}(|\nabla u|^2-\frac{\pi^2}{4d^2}|u|^2)dxdy + \int_{\Omega_k}\frac{\pi^2}{4d^2}|u|^2)dxdy\\
&\le& \max\left\{1,\frac{C_1\pi^2}{4d^2}\right\}\int_{\Omega}(|\nabla u|^2-\frac{\pi^2}{4d^2}|u|^2)dxdy.
\end{eqnarray*} 
\end{proof}
\begin{lemma}{\rm \cite[Lemma 4.3]{MK}}\label{lemma4}
 Let $J_k:= (k, k+1), k\in\mathbb{N}$ and $I:= (0,d)$. Then for any $V\ge 0$ and any $m\in\mathbb{N}$, there exists a finite cover of $\Omega_k$ by rectangles $R_{kj},\,j=1,2, ..., m_0$ such that $m_0< m$ and 
\begin{equation}\label{m}
\int_{\Omega_k}V|w|^2dxdy \le C_3m^{-1}\|V\|_{L_1(J_k, L_{\psi}(I))}\int_{\Omega_k}(|\nabla w|^2 + |w|^2)dxdy
\end{equation}
for all $w\in W^1_2(\Omega_k)$ such that $w(x,0) =0,\, w'(x,d)= 0$ with $\frac{1}{|R_{ij}|}\int_{R_{ij}}w dxdy =0$ and $C_3$ is a constant independent of $V$.
\end{lemma}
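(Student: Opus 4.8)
The plan is to prove the estimate \eqref{m} by a localization argument built on two ingredients: a scale--invariant local weighted inequality valid on a single rectangle for functions of zero average, and a covering of $\Omega_k$ that distributes the ``mass'' of $V$ evenly among at most $m$ rectangles. Granting these, one writes $\int_{\Omega_k}V|w|^2\,dxdy=\sum_{j}\int_{R_{kj}}V|w|^2\,dxdy$, applies the local inequality on each $R_{kj}$, and sums; the even distribution forces $\max_j\|V\|_{L_1(\cdot,L_\psi(\cdot))\,\text{on }R_{kj}}\le C m^{-1}\|V\|_{L_1(J_k,L_\psi(I))}$, which produces the factor $m^{-1}$ in front, while the local gradient terms reassemble into $\int_{\Omega_k}(|\nabla w|^2+|w|^2)\,dxdy$.

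For the local inequality I would show that on a rectangle $R\subseteq\Omega_k$ and for every admissible $w$ with $\frac{1}{|R|}\int_R w\,dxdy=0$,
$$
\int_R V|w|^2\,dxdy\le C\,\|V\|_{L_1(\cdot,L_\psi(\cdot))\,\text{on }R}\int_R(|\nabla w|^2+|w|^2)\,dxdy,
$$
with $C$ independent of $R$ and of $V$. The natural route is Orlicz duality: using the mutually complementary pair $\phi,\psi$ of \eqref{N}, the H\"older inequality for Orlicz spaces bounds $\int_R V|w|^2$ by the product of the $\psi$--norm of $V$ and the $\phi$--norm of $|w|^2$; the latter is then controlled by a Sobolev norm of $w$ through the two--dimensional exponential--class (Moser--Trudinger/Yudovich) embedding $W^1_2\hookrightarrow L_\phi$, the Dirichlet condition $w(x,0)=0$ and the zero--average condition being used to make this embedding hold with a constant that does not deteriorate with the size or aspect ratio of $R$ (this is where a Poincar\'e inequality for mean--zero functions enters).

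The partition I would construct by an adaptive stopping--time argument. Writing $\mu(R):=\|V\|_{L_1(\cdot,L_\psi(\cdot))\,\text{on }R}$ and noting that this functional is monotone and absolutely continuous as the rectangle grows, one subdivides $\Omega_k$ so that each piece carries mass at most $\tfrac{2}{m}\mu(\Omega_k)$; since the pieces cover $\Omega_k$ and the threshold exceeds $\tfrac1m\mu(\Omega_k)$, their number $m_0$ satisfies $m_0<m$. The main obstacle is the local inequality rather than the bookkeeping: the two--dimensional exponential embedding constant degenerates for thin or large rectangles, and the delicate point is to combine the Orlicz--H\"older step (which is one--dimensional in the cross--section $I$) with the two--dimensional embedding so that, after invoking the mean--zero normalization, the final constant $C$ is genuinely uniform over all rectangles of the cover; correctly matching the mixed norm $\|\cdot\|_{L_1(J_k,L_\psi(I))}$ of \eqref{norm} to the exponential embedding is the technical heart of the argument.
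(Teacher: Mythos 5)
The first thing to note is that this paper contains no proof of Lemma \ref{lemma4}: it is quoted verbatim from \cite[Lemma 4.3]{MK}, so your attempt has to be measured against the argument in that reference, which follows Solomyak's piecewise-approximation method \cite{Sol} adapted to the mixed norm \eqref{norm}. Your overall architecture --- an adaptive cover distributing the mass of $V$, a local estimate for mean-zero functions, then summation --- is indeed the architecture of that proof. The gap is in your local estimate, and it is fatal as formulated, not merely ``delicate''.

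Your local step pairs $V$ with $|w|^2$ via the \emph{two-dimensional} Orlicz--H\"older inequality and then the two-dimensional exponential embedding; this can only produce $\int_R V|w|^2\le C\|V\|^{(av)}_{\psi,R}\int_R(|\nabla w|^2+|w|^2)\,dxdy$ with the planar Orlicz norm of $V$ on $R$, so your bookkeeping needs $\max_j\|V\|^{(av)}_{\psi,R_{kj}}\le Cm^{-1}\|V\|_{L_1(J_k,L_\psi(I))}$, i.e.\ control of a planar Orlicz quantity by the mixed norm. No such control exists, precisely because $\psi(t)\sim t\ln t$ makes the planar norm sensitive to concentration of $V$ in the $x$-direction while the mixed norm, by design, is not. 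For $V_n(x,y)=n\mathbf{1}_{(k,k+1/n)}(x)$ one has $\|V_n\|_{L_1(J_k,L_\psi(I))}=d\,\phi^{-1}(1)$ for every $n$ (test \eqref{equi} with constants and use Jensen), whereas $\|V_n\|^{(av)}_{\psi,\Omega_k}\gtrsim d\ln n$ (test \eqref{equi} with $g=c\mathbf{1}_{\{x<k+1/n\}}$, $c\sim\ln n$); and a variant with $m$ equally spaced spikes of amplitude $n/m$ keeps the mixed norm fixed while forcing, for large $n$, \emph{every} cover by fewer than $m$ rectangles to contain a piece whose planar norm exceeds any prescribed multiple of $m^{-1}d$. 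So completing your plan would at best prove \eqref{m} with $\|V\|^{(av)}_{\psi,\Omega_k}$ on the right --- Solomyak's classical square estimate, which is strictly weaker --- and the lemma would not follow. Two subsidiary claims are also false as stated: the mean-zero normalization does \emph{not} make the planar embedding uniform in aspect ratio (on $R=(0,\delta)\times(0,d)$ a mean-zero $w=w(y)$ has Orlicz norm of $|w|^2$ independent of $\delta$ while $\int_R(|\nabla w|^2+|w|^2)\,dxdy=O(\delta)$), and a cover by full-height vertical strips cannot work at all, since any mean-zero $w=w(y)$ satisfies every orthogonality condition and letting $m\to\infty$ in \eqref{m} would force $\int_{\Omega_k}V|w|^2=0$. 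The proof in \cite{MK} avoids all of this by being anisotropic from the start: H\"older is applied in $y$ only, at each fixed $x$, so that $\int\|V(x,\cdot)\|^{(av)}_{\psi,I}\,dx$ appears directly; the cover respects this structure; and $\sup_x\||w(x,\cdot)|^2\|^{(av)}_{\phi,I}$ is controlled via the embedding of traces of $W^1_2$-functions on vertical segments into the exponential Orlicz class (essentially $H^{1/2}(I)\hookrightarrow L_\phi(I)$), with the mean-zero conditions supplying the $m^{-1}$ gain. That anisotropic step is exactly what your sketch defers, and it is the actual content of the lemma.
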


Define a partition of $\mathbb{R}$ by the intervals
$$
I_k = [2^{k-1}, 2^k],\,k>0,\,I_0=[-1,1],\, I_k = [2^{|k|}, 2^{|k|-1}],\,k<0.
$$ Define the following quantities;
\begin{eqnarray*}
\beta_0 &:=& \frac{2}{d}\int_{I_0}\left(\int_0^dV\sin^2\frac{\pi}{2d}y\,dy\right)\,dx,\\
\beta_k &:=& \frac{2}{d}\int_{I_k}\left(\int_0^d|x|V\sin^2\frac{\pi}{2d}y\,dy\right)\,dx, k\not= 0,\\
\mathcal{C}_k &:=& \|V\|_{L_1(J_k, L_{\psi}(I))}.
\end{eqnarray*}

\begin{theorem}\label{resut1}
For any $V\ge 0$ there are constants $C_4,C_5, C_6>0$ such that
\begin{equation}\label{mr1}
N_-(q)\le 1+ C_6\left(\sum_{\beta_k >C_4}\sqrt{\beta_k}+ \sum_{\mathcal{C}_k>C_5}\mathcal{C}_k\right).
\end{equation}
\end{theorem}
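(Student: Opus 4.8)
The plan is to bound $N_-(q_1)$ and $N_-(q_2)$ separately and then invoke the splitting \eqref{Number1}; the first operator will account for the summand $1 + C\sum_{\beta_k > C_4}\sqrt{\beta_k}$ and the second for $C\sum_{\mathcal{C}_k > C_5}\mathcal{C}_k$.

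\textbf{Reduction of $q_1$ to one dimension.} Since every $v \in \mathcal{H}_1$ has the form $v = h(x)\sin\frac{\pi}{2d}y$, I would first substitute this into $q_1$ and integrate out $y$. Using $\int_0^d\sin^2\frac{\pi}{2d}y\,dy = \frac{d}{2}$ and $\int_0^d\frac{\pi^2}{4d^2}\cos^2\frac{\pi}{2d}y\,dy = \frac{\pi^2}{8d}$, one sees that the term $-\frac{\pi^2}{4d^2}|v|^2$ exactly cancels the contribution of $|\partial_y v|^2$, leaving
$$
q_1[v] = \frac{d}{2}\int_{\mathbb{R}}|h'(x)|^2\,dx - 2\int_{\mathbb{R}}W(x)|h(x)|^2\,dx, \qquad W(x) := \int_0^d V(x,y)\sin^2\tfrac{\pi}{2d}y\,dy.
$$
Thus $N_-(q_1)$ equals the number of negative eigenvalues of the one-dimensional Schr\"odinger operator $-\frac{d}{2}\frac{d^2}{dx^2} - 2W$ on $L^2(\mathbb{R})$. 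I would then apply the known one-dimensional weighted bound for such operators relative to the dyadic partition $\{I_k\}$ (as in \cite{MK,Grig,Kar1}), which yields $N_-(q_1) \le 1 + C\sum_{\beta_k > C_4}\sqrt{\beta_k}$; the weights $\frac{2}{d}$ and $|x|$ in the definition of $\beta_k$ are precisely those appearing in that estimate, and the leading $1$ is the single bound state a one-dimensional well may always support.

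\textbf{Estimate of $q_2$ by local averaging.} Because the cells $\Omega_k = (k,k+1)\times(0,d)$ are disjoint, the form splits as $q_2[w] = \sum_k q_2^{(k)}[w]$, where $q_2^{(k)}[w]$ is the integral over $\Omega_k$ alone. On each cell I would combine Lemma \ref{lemma3} and Lemma \ref{lemma4}: Lemma \ref{lemma3} allows me to replace $\int_{\Omega_k}(|\nabla w|^2 + |w|^2)$ by $C_7\int_{\Omega_k}(|\nabla w|^2 - \frac{\pi^2}{4d^2}|w|^2)$ for $w \perp \sin\frac{\pi}{2d}y$, after which Lemma \ref{lemma4} gives, for $w$ with vanishing mean on each rectangle of the chosen cover,
$$
2\int_{\Omega_k}V|w|^2\,dxdy \le 2C_3C_7\,m^{-1}\mathcal{C}_k\int_{\Omega_k}\Bigl(|\nabla w|^2 - \tfrac{\pi^2}{4d^2}|w|^2\Bigr)\,dxdy.
$$
Choosing $m = m_k \sim 2C_3C_7\mathcal{C}_k$ on the cells with $\mathcal{C}_k > C_5 := (2C_3C_7)^{-1}$ forces the right-hand factor to be $\le 1$, so $q_2^{(k)}[w] \ge 0$ there; on cells with $\mathcal{C}_k \le C_5$ the factor is already $\le 1$ with $m_k=1$, so no constraint is needed. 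Each heavy cell contributes fewer than $m_k$ vanishing-mean conditions, so on the subspace of $\mathcal{H}_2$ cut out by all these conditions at once, $q_2 = \sum_k q_2^{(k)} \ge 0$; its codimension is at most $\sum_{\mathcal{C}_k > C_5} m_k \lesssim \sum_{\mathcal{C}_k > C_5}\mathcal{C}_k$, whence by \eqref{number} $N_-(q_2) \le C\sum_{\mathcal{C}_k > C_5}\mathcal{C}_k$.

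Combining the two estimates through \eqref{Number1} yields \eqref{mr1}. The $q_1$ reduction aside, the hard part will be the treatment of $q_2$: one must choose the refinement parameters $m_k$ so that the per-cell forms become nonnegative while keeping the total number of imposed constraints proportional to $\sum\mathcal{C}_k$, and then assemble the local vanishing-mean conditions into a single global subspace of the claimed codimension. Checking that the thresholds $C_4,C_5$ and the constant $C_6$ can be taken uniformly in $V$ is where the real care lies.
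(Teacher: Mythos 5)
Your proposal is correct and follows essentially the same route as the paper: the same splitting \eqref{Number1}, the same reduction of $q_1$ to the one-dimensional operator $-\frac{d^2}{dx^2}-2\hat V$ on $L^2(\mathbb{R})$ handled by the known dyadic-interval bound, and the same combination of Lemmas \ref{lemma3} and \ref{lemma4} with the choice $m\sim\mathcal{C}_k$ for $q_2$. The only difference is one of bookkeeping: where the paper invokes Lemmas 3.2.14 and 1.6.2 of \cite{MK} to pass from the per-cell estimates to $N_-(q_2)\le C\sum_{\mathcal{C}_k>C_5}\mathcal{C}_k$, you unpack that step as an explicit count of the codimension of a subspace on which $q_2\ge 0$, which is precisely what those cited lemmas encapsulate.
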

\begin{proof}
Since $q_1$ is the restriction of the quadratic form $q$ to the subspace $\mathcal{H}_1$, we have
\begin{eqnarray*}
q_1[v] &=& \int_{\Omega}|h'(x)\sin\frac{\pi}{2d}|^2 dxdy +\frac{\pi}{2d} \int_{\Omega}|h(x)\cos\frac{\pi}{2d}|^2 dxdy - \frac{\pi^2}{4d^2} \int_{\Omega}|h(x)\sin\frac{\pi}{2d}|^2 dxdy\\ &-& 2\int_{\Omega}V|h(x)\sin\frac{\pi}{2d}|^2 dxdy\\ &=&\int_{\mathbb{R}}|h'(x)|^2dx\int_0^d\sin^2\frac{\pi}{2d}ydy + \frac{\pi^2}{4d^2}\int_{\mathbb{R}}|h(x)|^2dx\int_0^d\cos^2\frac{\pi}{2d}ydy\\ &-& \frac{\pi^2}{4d^2}\int_{\mathbb{R}}|h(x)|^2dx\int_0^d\sin^2\frac{\pi}{2d}ydy -2\int_{\mathbb{R}}|h(x)|^2dx\int_0^dV\sin^2\frac{\pi}{2d}ydy\\ &=& \frac{d}{2}\int_{\mathbb{R}}|h'(x)|^2dx - 2\int_{\mathbb{R}}|h(x)|^2dx\int_0^dV\sin^2\frac{\pi}{2d}ydy\\
&=&\frac{d}{2}\left(\int_{\mathbb{R}}|h'(x)|^2dx - 2\int_{\mathbb{R}}\hat{V}|h(x)|^2dx\right)
\end{eqnarray*} where 
$$
\hat{V}(x)= \frac{2}{d}\int_0^d V\sin^2\frac{\pi}{2d}y\,dy.
$$
Therefore the form $q_1$ is a quadratic form associated with the one-dimensional self-adjoint operator
$$
-\frac{d^2}{dx^2} - 2\hat{V} \mbox{ on } L^2(\mathbb{R}).
$$ 
It follows from the definitions of $\beta_0$ and $\beta_k$ that
$$
\beta_0 = \int_{I_0}\hat{V}(x)\,dx \mbox{ and } \beta_k = \int_{I_k}|x|\hat{V}(x)\,dx, \,k\not= 0.
$$Hence there is a constant $ C_7>0$ such that
\begin{equation}\label{est1}
N_-(q_1) \le 1+ C_7\sum_{\beta_k >C_4}\sqrt{\beta_k},
\end{equation}
see for example  \cite{Grig,Eugene,Sol}. It remains to obtain the last term in the right hand side of \eqref{mr1}. This is achieved by restricting the form $q$ to the subspace $\mathcal{H}_2$. Lemma \ref{lemma3} and \eqref{min} imply that for all $w\in W^1_2(\Omega_k)$ with $w\perp \sin\frac{\pi}{2d}y$
$$
\int_{\Omega_k}(|\nabla w|^2 +|w|^2)dxdy \le C_8\left(\int_{\Omega_k}(|\nabla w|^2 -\frac{\pi^2}{4d^2}|w|^2)dxdy\right),
$$
where $C_8 = \max\{C_1, C_2\}$. By Lemma \ref{lemma4}, we have that for all $w\in W^1_2(\Omega_k)\cap C(\overline{\Omega_k})$ satisfying the $m_0$ orthogonality conditions
$$
2\int_{\Omega_k}V|w|^2dxdy \le C_9m^{-1}\|V\|_{L_1(J_k, L_{\psi}(I))}\left(\int_{\Omega_k}(|\nabla w|^2 -\frac{\pi^2}{4d^2}|w|^2)dxdy\right),
$$
where $C_9 = 2C_3C_8$. Let $m = \lceil C_9\|V\|_{L_1(J_k, L_{\psi}(I))}\rceil + 1$, where $\lceil  x\rceil$ denotes the largest integer not greater than $x$, then similarly to 
{\rm \cite[Lemma 3.2.14]{MK}}, we have
\begin{equation}\label{est2}
N_-(q_2) \le C_9\|V\|_{L_1(J_k, L_{\psi}(I))} + 2.
\end{equation}
Again, taking $m = 1$, we have 
$$
2\int_{\Omega_k}V|w|^2dxdy \le C_9\|V\|_{L_1(J_k, L_{\psi}(I))}\left(\int_{\Omega_k}(|\nabla w|^2 -\frac{\pi^2}{4d^2}|w|^2)dxdy\right).
$$
If $\|V\|_{L_1(J_k, L_{\psi}(I))} \le \frac{1}{C_9}$, then
$$
2\int_{\Omega_k}V|w|^ddxdy \le \int_{\Omega_k}(|\nabla w|^2 -\frac{\pi^2}{4d^2}|w|^2)dxdy.
$$
This implies that $N_-(q_2)$ = 0. Otherwise, \eqref{est2} gives
\begin{equation}\label{eq}
N_-(q_2) \le 2\left(\frac{1}{2}C_9\|V\|_{L_1(J_k, L_{\psi}(I))} + 1\right) = C_{10}\mathcal{C}_k,
\end{equation} where $C_{10} = 3C_9$. Hence for all $C_5< \frac{1}{C_{10}}$, it follows by {\rm \cite[Lemma 1.6.2]{MK}} that
\begin{equation}\label{est3}
N_-(q_2) \le C_{10}\sum_{\mathcal{C}_k>C_5}\mathcal{C}_k,
\end{equation}
Hence \eqref{mr1} follows from \eqref{Number1}, \eqref{est1} and \eqref{est3}, where $C_6 = \max\{C_7, C_{10}\}$.
\end{proof}

Let $\alpha >0$. Then scaling the potential $V$ by $\alpha$ gives the following semi-classical growth for $N_-(q)$ 
\begin{equation}\label{classical}
N_-(q) = O(\alpha) \mbox{ as } \alpha \longrightarrow\infty,
\end{equation}
see e.g.,\cite{Eugene} and the references therein. One can easily show that the finiteness of the first term in \eqref{mr1} is necessary for \eqref{classical} to hold (cf.{\rm \cite[Theorem 6]{Kar1}}). However, finiteness of the second term does not necessarily mean that \eqref{mr1} satisfies \eqref{classical} and vice versa {\rm \cite[Section 9 ]{Eugene}}. On the other hand, no estimate of the form in \eqref{mr1} can hold for any Orlicz norm weaker than the norm of $V$ in the second term {\rm \cite[Theorem 9.4]{Eugene}}.

\section{Estimates for curved quantum waveguides}
The spectral properties of curved quantum waveguides with various boundary conditions have been studied by different authors. It is well known that the presence of eigenvalues below the bottom of the essential spectrum depends on both the geometry of the waveguide and choice of boundary conditions. More information regarding these results can be found in \cite{ES,ET, KJ} and the references therein. 
\subsection{The configuration space and the Laplacian}
Let $\gamma$ be a unit speed curve , that is, a $C^2$ smooth embedding $\gamma:\mathbb{R}\longrightarrow\mathbb{R}^2: \{s \longmapsto (\gamma_1(s), \gamma_2(s))\}$ satisfying $|\dot{\gamma}(s)|=1$ for all $s\in\mathbb{R}$. The normal unit vector field is defined by $N = (-\dot{\gamma_2}, \dot{\gamma_1})$ and $T = (\dot{\gamma_1}, \dot{\gamma_2})$ gives the tangent field. The pair (T, N) gives the Frenet-Serret frame \cite{Klin}. The curvature $k$ of $\gamma$ is defined through the Frenet-Serret formulae by $k(s)= \mbox{ det }(\dot{\gamma}, \ddot{\gamma})$ as a continuous function of the arc-length parameter $s$. We define a curved strip of constant width $d$ as $\Omega' = L(\Omega)$, where 
$$
L:\mathbb{R}^2\longrightarrow\mathbb{R}^2:\{(s,u)\longmapsto \gamma(s)+ uN(s)\}.
$$
We shall make the following assumption:\\\\ 
(A): $\Omega'$ is non-self intersecting and $k\in L^{\infty}(\mathbb{R})$ with $d\|k_+\|_{\infty} < 1$, where $k_+ = \max\{0, \pm k\}$.\\\\
The above assumption implies that $s\longmapsto L(s,u)$ for a fixed $u\in (0,d)$ traces out a parallel curve at a distance $|u|$ from $\gamma$ and $u\longmapsto L(s,u)$ for a fixed $s\in\mathbb{R}$ is a straight line orthogonal to $\gamma$ at $s$. In addition, the mapping $L: \Omega \longrightarrow\Omega'$ is a $C^1$-diffeomorphism and its inverse determines a system of coordinates $(s,u)$ in the neighbourhood of $\gamma$. Through the Frenet-Serret formulae, the metric tensor $G$ of $\Omega'$ is given by
\begin{equation}\label{tensor}
G(s,u)=
\begin{pmatrix}
(1-uk(s))^2 & 0\\
0 & 1
\end{pmatrix}
\end{equation}

The determinant of $G$ is given by $|G| = (1-uk(s))^2$ and its inverse is
$$
G^{-1}(s,u)=
\begin{pmatrix}
\frac{1}{1-uk(s)^2} & 0\\
0&1
\end{pmatrix}.
$$
The area of the element of the curved strip is defined through
$$
d\Omega' = \sqrt{G}dsdu = (1-uk(s))dsdu.
$$
By the virtue of the second part of assumption (A), it is clear that the metric tensor $G$ in \eqref{tensor} is uniformly elliptic. In particular, for all $(s,u)\in\Omega$, we have
\begin{equation}\label{elliptic}
1-d\|k_+\|_{\infty}\le 1-uk(s)\le 1+d\|k_-\|_{\infty}.
\end{equation}
The expression for the Laplacian on the strip equipped with the metric tensor \eqref{tensor} is given by
\begin{eqnarray}\label{Lapl}
-\Delta_{\Omega} &=& -\frac{1}{\sqrt{G}}(\partial_s, \partial_u)\left(\sqrt{G}G^{-1}\begin{pmatrix}\partial_s\\\partial_u \end{pmatrix}\right)\nonumber\\
&=&-\frac{\partial_s^2}{(1-uk(s))^2} - \frac{u\dot{k}(s)}{(1-uk(s))^3}\partial_s -\partial_u^2 + \frac{k(s)}{1-uk(s)}\partial_u.
\end{eqnarray}
The domain of \eqref{Lapl} is the subspace of the $W^2_2(\Omega)$ whose elements satisfy the boundary conditions \eqref{bcs}\cite{Krez}.
The Schr\"{o}dinger operator in \eqref{wg}  on the strip with metric tensor $G$ becomes
\begin{equation}\label{curvedop}
Q' = -\Delta_{\Omega}-\frac{\pi^2}{4d^2} - V, \,V\ge 0
\end{equation}
subject to the boundary conditions in \eqref{bcs}. If $q'$ is the quadratic form corresponding to $Q'$, then for every $f\in L^2(\Omega, (1-uk(s)dsdu))$, we have
\begin{eqnarray}\label{newform}
q'[f] &=& \int_{\Omega}\frac{|\partial_sf|^2}{1-uk(s)}dsdu + \int_{\Omega}|\partial_uf|^2(1-uk(s))dsdu - \frac{\pi^2}{4d^2}\int_{\Omega}|f|^2(1-uk(s))dsdu\nonumber\\
&-&\int_{\Omega}V|f|^2(1-uk(s))dsdu,\\
D(q')&=&\{f\in W^1_2(\Omega, (1-uk(s)dsdu):f(x,0) = 0, \, \frac{\partial f}{\partial \textbf{n}}(x,d)=0\}\cap L^2(\Omega, V(1-uk(s)dsdu).\nonumber
\end{eqnarray}
\subsection{Estimates for the number of eigenvalues}
By the ellipicity property in \eqref{elliptic}, it follows from \eqref{newform} that
\begin{eqnarray*}
q'[f]&\ge&\frac{1}{1+d\|k_-\|_{\infty}}\int_{\Omega}|\partial_sf|^2dsdu + (1-d\|k_+\|_{\infty})\int_{\Omega}|\partial_uf|^2dsdu \\ &-&\frac{\pi^2}{4d^2}(1+d\|k_-\|_{\infty})\int_{\Omega}|f|^2dsdu - (1+d\|k_-\|_{\infty})\int_{\Omega}V|f|^2dsdu  \\
&\ge& \min\left\{\frac{1}{1+d\|k_-\|_{\infty}},1-d\|k_+\|_{\infty}\right\}\left(\int_{\Omega}|\partial_sf|^2dsdu + \int_{\Omega}|\partial_uf|^2dsdu\right)\\
&-&\frac{\pi^2}{4d^2}(1+d\|k_-\|_{\infty})\int_{\Omega}|f|^2dsdu - (1+d\|k_-\|_{\infty})\int_{\Omega}V|f|^2dsdu
\end{eqnarray*}
Let
$$
m:=\min\left\{\frac{1}{1+d\|k_-\|_{\infty}},1-d\|k_+\|_{\infty}\right\}.
$$
Then
\begin{eqnarray}\label{curveform}
q'[f]&\ge& m\left(\int_{\Omega}|\partial_sf|^2dsdu + \int_{\Omega}|\partial_uf|^2dsdu
-\frac{\pi^2}{4md^2}(1+d\|k_-\|_{\infty})\int_{\Omega}|f|^2dsdu\right)\nonumber\\ &-&\frac{1}{m} (1+d\|k_-\|_{\infty})\int_{\Omega}V|f|^2dsdu.
\end{eqnarray}
The right hand side of \eqref{curveform} represents a quadratic form of a lower semi-bounded self-adjoint operator
\begin{equation}\label{newoperator}
-\Delta - \lambda'_1-V',\,\,V'\ge 0
\end{equation}
on $L^2(\Omega, dsdu)$, where $\lambda'_1= \frac{\pi^2}{4md^2}(1+d\|k_-\|_{\infty})$ and $V'= \frac{V(1+d\|k_-\|_{\infty})}{m}$.\\
Let  $q'_0$ be the quadratic form corresponding to \eqref{newoperator}. Then
$$
q'[f] \ge q_0'[f].
$$
Hence, it follows by \eqref{number} that
\begin{equation}\label{est5} 
N_-(q')\le N_-(q'_0).
\end{equation}

The transverse eigenfunction corresponding to $\lambda'_1$ is given by\\ $\eta(u) = \sin \frac{\pi l}{2d}u$, where
$$
l = \sqrt{\frac{1+d\|k_-\|_{\infty}}{m}}.
$$
Let
$$
\beta := \int_0^d\sin^2 \frac{\pi l}{2d}u\,du
$$
and let $\mathcal{H}'$ denote the domain of $q'_0$. Define $P:\mathcal{H}'\longrightarrow\mathcal{H}'_1 $ by
\begin{equation}\label{proj}
Pf(s,u) = \left(\frac{1}{\beta}\int_0^df(s,u)\sin\frac{\pi l}{2d}y du\right)\sin\frac{\pi l}{2d}u = j(s)\sin\frac{\pi l}{2d}u
\end{equation} for all $f\in\mathcal{H}'$, where
$$
j(s):= \frac{1}{\beta}\int_0^df(s,u)\sin\frac{\pi l}{2d}u\, du.
$$
Then $P$ is a projection. Let $\mathcal{H}'_2 = (I-P)\mathcal{H}'$, then similarly to lemmas \ref{lemma1} and \ref{lemma2}, we can easily show that $\mathcal{H}' = \mathcal{H}'_1\oplus\mathcal{H}'_2$, and that the subspace $\mathcal{H}'_2$ is orthogonal to $\sin \frac{\pi l}{2d}u$ in $L^2(0,d)$ respectively.\\\\

Let the intervals $I_k$ and $J_k$ be defined as before. Let
\begin{eqnarray*}
\gamma_0 &:=& \frac{l^2}{\beta}\int_{I_0}\left(\int_0^dV(s,u)\sin^2\frac{\pi l}{2d}u\,du\right)\,ds,\\
\gamma_k &:=& \frac{l^2}{\beta}\int_{I_0}\left(\int_0^d|s|V(s,u)\sin^2\frac{\pi l}{2d}u\,du\right)
\,ds,\,k\not=0,\\
\mathcal{D}_k &:=&\|V\|_{L_1(J_k, L_{\psi}(I))}.
\end{eqnarray*}

\begin{theorem}\label{mr2}
 For any $V\ge 0$ there exist constants $C_{11}, C_{12}>0$ such that
\begin{equation}\label{est4}
N_-(q') \le 1+ C_{13}\left(\sum_{\gamma_k > C_{11}}\sqrt{\gamma_k} + \sum_{\mathcal{D}_k>C_{12}}\mathcal{D}_k\right),
\end{equation}
where $C_{13}$ is a constant depending on the curvature $k$.
\end{theorem}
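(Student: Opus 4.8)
The plan is to reuse the two-channel decomposition from the proof of Theorem~\ref{resut1}, now applied to the comparison operator \eqref{newoperator}. By \eqref{est5} it suffices to bound $N_-(q_0')$, and since $q_0'$ differs from the straight-strip form \eqref{quad} only through the replacement of $\frac{\pi^2}{4d^2}$ by $\lambda_1'$, of $V$ by $V'=l^2V$, and of $\sin\frac{\pi}{2d}y$ by the transverse profile $\eta(u)=\sin\frac{\pi l}{2d}u$, the whole machinery of Section~3 should transfer. Using the splitting $\mathcal{H}'=\mathcal{H}_1'\oplus\mathcal{H}_2'$ recorded above, I would first establish the form inequality $q_0'[f]\ge q_1'[j\eta]+q_2'[(I-P)f]$, the analogue of \eqref{ineq}, where $q_1'$ and $q_2'$ denote the restrictions of $q_0'$ to $\mathcal{H}_1'$ and $\mathcal{H}_2'$. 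This gives $N_-(q_0')\le N_-(q_1')+N_-(q_2')$, and the two summands are then estimated separately.

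For the $\mathcal{H}_1'$ channel I would restrict $q_0'$ to functions $f=j(s)\eta(u)$ and carry out the $u$-integration explicitly. Collecting the transverse integrals $\beta=\int_0^d\eta^2\,du$ and $\int_0^d|\eta'|^2\,du$ reduces $q_1'$ to a positive multiple of the one-dimensional Schr\"odinger form $\int_{\mathbb R}|j'|^2\,ds-2\int_{\mathbb R}\hat V'|j|^2\,ds$ on $L^2(\mathbb R)$, where $\hat V'(s)=\frac{l^2}{\beta}\int_0^dV(s,u)\sin^2\frac{\pi l}{2d}u\,du$ is exactly the density appearing in the definitions of $\gamma_0$ and $\gamma_k$, so that $\gamma_0=\int_{I_0}\hat V'\,ds$ and $\gamma_k=\int_{I_k}|s|\hat V'\,ds$. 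The known one-dimensional bound for such operators, invoked for \eqref{est1} (see \cite{Grig,Eugene,Sol}), then yields $N_-(q_1')\le 1+C\sum_{\gamma_k>C_{11}}\sqrt{\gamma_k}$.

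For the $\mathcal{H}_2'$ channel I would reproduce the three ingredients used for $q_2$. First, the transverse spectral-gap argument leading to \eqref{min} and Lemma~\ref{lemma3}: on each cell $\Omega_k$ the min--max principle applied to the transverse operator on functions orthogonal to $\eta$ gives a Poincar\'e-type inequality $\int_{\Omega_k}(|\nabla w|^2+|w|^2)\le C\int_{\Omega_k}(|\nabla w|^2-\lambda_1'|w|^2)$ for $w\perp\eta$, the constant being controlled by the gap between $\lambda_1'$ and the next transverse eigenvalue. Second, Lemma~\ref{lemma4} applied to $V'$ gives, after subtracting cell averages, $2\int_{\Omega_k}V'|w|^2\le Cm^{-1}\mathcal{D}_k\int_{\Omega_k}(|\nabla w|^2-\lambda_1'|w|^2)$ for $w$ satisfying the $m_0$ orthogonality conditions. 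Choosing $m=\lceil C\mathcal{D}_k\rceil+1$ as in the proof of Theorem~\ref{resut1} and summing over the cells $\Omega_k$ via \cite[Lemma 1.6.2]{MK} produces $N_-(q_2')\le C\sum_{\mathcal{D}_k>C_{12}}\mathcal{D}_k$. Combining the channel bounds through $N_-(q')\le N_-(q_0')\le N_-(q_1')+N_-(q_2')$ then yields \eqref{est4} with $C_{13}$ the larger of the two channel constants; since $l$, $\beta$, $m$ and the gap constant all depend on $\|k_\pm\|_\infty$, this $C_{13}$ depends on the curvature, as claimed.

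I expect the main obstacle to lie in the transverse analysis. Unlike $\sin\frac{\pi}{2d}y$ in the straight case, the profile $\eta=\sin\frac{\pi l}{2d}u$ is tuned to the shift $\lambda_1'$ rather than to the Neumann eigenfunction of $-\partial_u^2$ on $(0,d)$, so that $\eta'(d)\neq0$ in general. Consequently the boundary terms at $u=d$ must be tracked carefully both when proving the orthogonality of the decomposition (the analogues of Lemmas~\ref{lemma1} and \ref{lemma2}) and when establishing the spectral-gap inequality, and one must in particular verify that $\lambda_1'$ remains dominated by the transverse kinetic energy on $\mathcal{H}_2'$ so that the comparison operator \eqref{newoperator} still has its essential spectrum starting at $0$; keeping the resulting constants uniform in the cell index $k$ while exhibiting their dependence on $k$ through $\|k_\pm\|_\infty$ is the principal bookkeeping difficulty.
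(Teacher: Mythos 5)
Your proposal retraces the paper's own proof almost step for step: the reduction to $N_-(q_0')$ via \eqref{est5}, the splitting $\mathcal{H}'=\mathcal{H}_1'\oplus\mathcal{H}_2'$, the collapse of the first channel onto the one--dimensional operator $-\frac{d^2}{ds^2}-2V_*$ with $V_*(s)=\frac{l^2}{\beta}\int_0^dV(s,u)\sin^2\frac{\pi l}{2d}u\,du$, and the cell-by-cell Orlicz-norm analysis on the second channel are exactly the steps taken there, with the constants combined in the same way.

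However, the difficulty you flag in your final paragraph and defer as ``bookkeeping'' is a genuine gap, and as stated it cannot be closed; the verification you postpone actually fails. Since $m\le\min\bigl\{\frac{1}{1+d\|k_-\|_\infty},\,1-d\|k_+\|_\infty\bigr\}$, one has
\[
l^2=\frac{1+d\|k_-\|_\infty}{m}\ \ge\ \max\left\{(1+d\|k_-\|_\infty)^2,\ \frac{1+d\|k_-\|_\infty}{1-d\|k_+\|_\infty}\right\}>1
\qquad\mbox{whenever } k\not\equiv 0,
\]
so $\lambda_1'=\frac{\pi^2l^2}{4d^2}$ lies \emph{strictly above} $\frac{\pi^2}{4d^2}$, which is the bottom of the spectrum of $-\Delta$ on the strip with the boundary conditions \eqref{bcs}. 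Consequently the comparison operator \eqref{newoperator} has essential spectrum $[\frac{\pi^2}{4d^2}-\lambda_1',\infty)$ with strictly negative bottom: taking disjointly supported, slowly varying functions $f_n(s,u)=\chi_n(s)\sin\frac{\pi}{2d}u$ gives an infinite-dimensional subspace of $D(q_0')$ on which $q_0'$ is negative, hence $N_-(q_0')=\infty$ in the sense of \eqref{number}, and the reduction \eqref{est5} can never produce a finite bound. The same failure reappears at the channel level, exactly where you predicted trouble: $\eta(u)=\sin\frac{\pi l}{2d}u$ solves $-\eta''=\lambda_1'\eta$ but violates the Neumann condition, since $\eta'(d)=\frac{\pi l}{2d}\cos\frac{\pi l}{2}\neq 0$ unless $l$ is an odd integer, and the $u$-integration on $\mathcal{H}_1'$ leaves the residue
\[
\Bigl(\int_0^d|\eta'|^2\,du-\lambda_1'\beta\Bigr)\int_{\mathbb{R}}|j(s)|^2\,ds
\;=\;\frac{\pi l\sin(\pi l)}{4d}\int_{\mathbb{R}}|j(s)|^2\,ds,
\]
which is strictly negative whenever $l\in(2n-1,2n)$, $n\in\mathbb{N}$ (in particular for $1<l<2$, the regime of small curvature). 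Then $q'_{01}$ is the form of $-\frac{d^2}{ds^2}-c-2V_*$ with a constant $c>0$, whose number of negative eigenvalues is infinite, so the appeal to the one-dimensional bound behind \eqref{est1} breaks down. (The paper's proof silently assumes the cancellation $\int_0^d|\eta'|^2\,du=\lambda_1'\beta$, i.e.\ treats $\eta$ as a genuine transverse eigenfunction, so your instinct locates precisely the point where it is glossy.) To make the strategy work one must either project onto the true first transverse eigenfunction $\sin\frac{\pi}{2d}u$ and show that the excess $\lambda_1'-\frac{\pi^2}{4d^2}$ can be absorbed by the remaining kinetic energy (which requires a smallness or decay assumption on the curvature), or replace the crude lower bound \eqref{curveform} by one whose associated comparison operator still has essential spectrum starting at $0$; transplanting the straight-strip machinery unchanged, as both you and the paper do, does not suffice.
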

\begin{proof}
If $q'_{01}$ and $q'_{02}$ are the restrictions of the form $q'_0$ to the subspaces $\mathcal{H}'_1$ and $\mathcal{H}'_2$ respectively, then
\begin{equation}\label{est13}
N_-(q'_0) \le N_-(q'_{01}) + N_-(q'_{02})
\end{equation} (cf. \ref{ineq}).

Now, for all $g\in \mathcal{H}'_1$, we have
\begin{eqnarray*}
q'_{01}[g] &=& \beta\int_{\mathbb{R}}|j'(s)|^2 ds - 2\frac{l^2}{\beta}\int_{\mathbb{R}}|j(s)|^2\left(\int_0^dV(s,u)\sin^2\frac{\pi l}{2d}udu\right)ds\\
&=&\beta\left(\int_{\mathbb{R}}|j'(s)|^2 ds - 2\int_{\mathbb{R}}V_*(s)|j(s)|^2ds\right),
\end{eqnarray*} 
where
$$
V_*(s) = \frac{l^2}{\beta}\int_0^dV(s,u)\sin^2\frac{\pi l}{2d}u\,du.
$$
Hence $q'_{01}$ represents the well studied one-dimensional self-adjoint operator
$$
-j''-V_*j
$$
on $L^2(\mathbb{R})$. The definitions of $\gamma_0$ and $\gamma_k$ imply that
$$
\gamma_0 = \int_{I_0}V_*(s)\,ds \mbox{ and } \gamma_k = \int_{I_k}|s|V_*(s)\,dx, \,k\not= 0.
$$
Thus there is a constant $ C_{14}>0$ such that
\begin{equation}\label{est14}
N_-(q'_{01}) \le 1+ C_{14}\sum_{\gamma_k >C_{11}}\sqrt{\gamma_k}.
\end{equation}
On the subspace $\mathcal{H}'_2$, partitioning $\Omega$ into rectangles $\{\Omega_k\}$ and following a similar procedure leading to \eqref{eq}, one can show that there is a constant $C_{15}>0$ such that
\begin{equation}\label{est15}
N_-(q'_{02}) \le C_{15}\|V\|_{L_1(J_k, L_{\psi}(I))} = C_{15}\mathcal{D}_k,
\end{equation}
where $C_{15} = l^2C_{10}$. Thus for all $C_{12}< \frac{1}{C_{15}}$, it follows similarly to \eqref{est3} that
\begin{equation}\label{est16}
N_-(q'_{02}) \le C_{15}\sum_{\mathcal{D}_k>C_{12}}\mathcal{D}_k.
\end{equation}
Hence \eqref{mr2} follows from \eqref{est13}, \eqref{est14} and \eqref{est16}, with $C_{13} = \max\{C_{14}, C_{15}\}$.

\end{proof}

\subsection{Estimates for the sum of eigenvalues}
We begin this section by stating the following result called the Lieb-Thirring inequality that we shall require in the proof of our estimate.
\begin{lemma}{\rm \cite[Theorem 4.38]{Rup}}\label{LT}
Let $\kappa > 0$ and let $\{\lambda_n(-\Delta - V)_-\}$ be a nondecreasing sequence of negative eigenvalues of the lower semi-bounded self-adjoint operator $-\Delta - V,\, V\ge 0 $. Then for $n\ge 2$ there is a constant $C_{16}>0$ depending on $\kappa$ and $n$  such that
\begin{equation}\label{est6}
\sum_n\lambda_n(-\Delta - V)^{\kappa}_-\le C_{10}(\kappa, n)\int_{\mathbb{R}^n}V(x)^{\kappa + \frac{n}{2}}\,dx
\end{equation} 
for any $V\in L^{\kappa +\frac{n}{2}}(\mathbb{R}^n)$. 
\end{lemma}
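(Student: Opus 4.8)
The statement is the classical Lieb--Thirring inequality, cited from \cite{Rup}; the plan I would follow reproduces the standard two-stage argument. Write the negative eigenvalues of $-\Delta - V$ as $-\mu_1 \le -\mu_2 \le \dots < 0$ with each $\mu_j > 0$, and set $N(E) := \#\{j : \mu_j > E\}$ for $E \ge 0$, the number of eigenvalues lying below $-E$. The first stage is the Aizenman--Lieb (layer-cake) identity
\begin{equation*}
\sum_j \mu_j^{\kappa} = \kappa \int_0^{\infty} E^{\kappa - 1} N(E)\, dE,
\end{equation*}
which follows from $\mu_j^{\kappa} = \kappa \int_0^{\mu_j} E^{\kappa - 1}\, dE$ and monotone convergence. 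This reduces the problem to a pointwise-in-$E$ bound on the counting function $N(E)$.

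For the second stage I would bound $N(E)$. The eigenvalues of $-\Delta - V$ below $-E$ coincide with the negative eigenvalues of $-\Delta - (V - E)$, and since $V - E \le (V-E)_+$ one has the operator inequality $-\Delta - (V-E) \ge -\Delta - (V-E)_+$, so that $N(E) \le N_-\big(-\Delta - (V-E)_+\big)$ in the sense of \eqref{number}. When $n \ge 3$ the CLR inequality gives $N_-(-\Delta - W) \le C_n \int_{\mathbb{R}^n} W^{n/2}\, dx$ for $W \ge 0$, hence $N(E) \le C_n \int_{\mathbb{R}^n}(V - E)_+^{n/2}\, dx$. Substituting this into the layer-cake identity, interchanging the order of integration (all integrands are nonnegative), and evaluating the inner integral by the scaling $E = V(x)\,t$ yields a Beta-function factor $B(\kappa, \tfrac n2 + 1)$ and the claimed inequality with $C_{16}(\kappa, n) = \kappa\, C_n\, B(\kappa, \tfrac n2 + 1)$.

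The main obstacle is the dimension of interest here, $n = 2$, where the CLR inequality fails (as recalled in the Introduction) and the counting step cannot be closed this way. Here I would invoke the Birman--Schwinger principle: $N(E)$ equals the number of eigenvalues $\ge 1$ of the Birman--Schwinger operator $K_E := V^{1/2}(-\Delta + E)^{-1} V^{1/2}$, whence $N(E) \le \mathrm{Tr}\,(K_E^{\,p})$ for any $p > 0$. Writing $K_E = B B^{*}$ with $B = V^{1/2}(-\Delta + E)^{-1/2}$ and applying the Kato--Seiler--Simon bound for $B$ as a product of a multiplication operator and a Fourier multiplier, one obtains, for $p > n/2$, a Schatten-norm estimate of the form $\mathrm{Tr}\,(K_E^{\,p}) \le C\, E^{-(p - n/2)} \int_{\mathbb{R}^n} V^{p}\, dx$. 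Since a single such bound does not integrate against $E^{\kappa - 1}$ over all of $(0,\infty)$, I would use it only to establish the inequality at one base exponent $\kappa_0 > 0$ and then propagate it to every $\kappa > \kappa_0$ by the Aizenman--Lieb monotonicity: apply the layer-cake identity to the shifted operators $-\Delta - (V-E)_+$, bound $\sum_j (\mu_j - E)_+^{\kappa_0}$ by the base case, and recombine by Fubini and the Beta function. The delicate points are the Schatten estimate on $K_E$ together with the correct tracking of its $E$-dependence, and the choice of base exponent so that the resulting $E$-integral converges; the remaining algebraic recombination is then routine.
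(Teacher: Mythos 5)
You should first note what you are up against: the paper itself gives no proof of this lemma at all --- it is quoted, with its constant, directly from \cite[Theorem 4.38]{Rup} (Frank--Laptev--Weidl). So your attempt has to be measured against the standard literature proof, which is indeed the route you sketch. Your first stage (the layer-cake identity $\sum_j \mu_j^{\kappa} = \kappa\int_0^\infty E^{\kappa-1}N(E)\,dE$) and your treatment of $n\ge 3$ via CLR, Fubini and the Beta function are correct as written. The Aizenman--Lieb propagation mechanism you describe at the end is also correct in itself.

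The genuine gap is in the only case this paper actually uses ($\kappa=1$, $n=2$): your base case is never established, and the method you propose for it cannot establish it. The bound $N(E)\le \mathrm{Tr}(K_E^{\,p})\le C\,E^{-(p-n/2)}\int V^p\,dx$ is fine, but plugging it into the layer-cake identity produces the pure power integral $\int_0^\infty E^{\kappa_0-1-(p-n/2)}\,dE$, which diverges at $0$ if $\kappa_0>p-n/2$, at $\infty$ if $\kappa_0<p-n/2$, and at both ends if $\kappa_0=p-n/2$; no ``choice of base exponent so that the resulting $E$-integral converges'' exists. Patching with two different exponents $p_1,p_2$ on the two ranges of $E$ also fails to give the statement, since it produces $\|V\|_{p_1}^{p_1}+\|V\|_{p_2}^{p_2}$ rather than the single homogeneous quantity $\int V^{\kappa+n/2}$. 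The missing idea --- the crux of the proof you are trying to reproduce --- is to shift the potential \emph{before} applying Birman--Schwinger: from $V\le (V-E/2)_+ + E/2$ one gets that the eigenvalues of $-\Delta-V$ below $-E$ are at most as numerous as those of $-\Delta-(V-E/2)_+$ below $-E/2$, hence
\[
N(E)\;\le\; \mathrm{Tr}\!\left[\left((V-E/2)_+^{1/2}\,(-\Delta+E/2)^{-1}\,(V-E/2)_+^{1/2}\right)^{p}\right]\;\le\; C\,E^{-(p-n/2)}\int_{\mathbb{R}^n}(V-E/2)_+^{p}\,dx.
\]
Now the $E$-dependence sits inside the $x$-integral: for each $x$ the integrand vanishes for $E>2V(x)$, so Fubini and the substitution $E=2V(x)t$ give a Beta integral $B(\kappa+\tfrac n2-p,\,p+1)$, finite precisely when $\tfrac n2<p<\kappa+\tfrac n2$ --- a nonempty window for every $\kappa>0$ once $n\ge 2$ --- with right-hand side $C\int V^{\kappa+n/2}\,dx$. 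With this device the inequality is proved directly for all $\kappa>0$ and $n\ge 2$, and your Aizenman--Lieb propagation step becomes unnecessary (it is correct, but without the base case it has nothing to feed on).
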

Similarly, let  $\{\lambda_n(Q)_-\}$ denote a nondecreasing sequence of negative eigenvalues of the operator in \eqref{wg}.
Then taking $\kappa =1$ and $n =2$, it follows from \eqref{est6} that
\begin{equation}\label{est7}
\sum_n\lambda_n(Q)_-\le C_{16}\|V\|^2_{L^2(\Omega)}, \,\,\forall V\ge 0.
\end{equation} 
 The following result can be found for example in \cite{Kato}.
 \begin{lemma}\label{Kato}
  Let  $T_1$ and $T_2$ be lower semi-bounded self-adjoint operators on a Hilbert space $\mathcal{H}$ and let $q_1$ and $a_2$ be their associated quadratic forms respectively. If $D(a_2)\subset D(a_1)$ and for all $u\in D(a_2)$, we have that \\$a_1[u] \le a_2[u]$, then $T_1 \le T_2$. Consequently, $\lambda_n(T_1)_- \le \lambda_n(T_2)_-$ and
 \begin{equation}\label{est8}
 \sum_n\lambda_n(T_1)_- \le \sum_n\lambda_n(T_2)_-.
 \end{equation}
 \end{lemma}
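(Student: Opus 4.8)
The plan is to derive all three conclusions from the variational (min--max) characterisation of the spectrum of a lower semi-bounded self-adjoint operator. The first assertion, $T_1\le T_2$, needs essentially no argument: by the first representation theorem (see \cite{Kato}) a lower semi-bounded self-adjoint operator is in one-to-one correspondence with its closed, densely defined, lower semi-bounded form, and the order relation between two such operators is \emph{defined} through their forms, namely by the requirement that $D(a_2)\subseteq D(a_1)$ together with $a_1[u]\le a_2[u]$ for all $u\in D(a_2)$. These are precisely the hypotheses, so $T_1\le T_2$ holds by definition and I would only recall this characterisation.

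The heart of the matter is the monotonicity of the variational values. For a lower semi-bounded self-adjoint operator $T$ with form $a$ I would work with
$$
\mu_n(T):=\inf_{\substack{M\subseteq D(a)\\ \dim M=n}}\ \sup_{\substack{u\in M\\ \|u\|=1}} a[u],\qquad n=1,2,\dots,
$$
which by the min--max principle coincides with the $n$-th eigenvalue of $T$, counted with multiplicity, as long as it lies below $\inf\sigma_{\mathrm{ess}}(T)$. The key step is the comparison $\mu_n(T_1)\le\mu_n(T_2)$, which I would establish in two moves. First, since $D(a_2)\subseteq D(a_1)$, the family of $n$-dimensional subspaces of $D(a_1)$ over which $\mu_n(T_1)$ is minimised contains every $n$-dimensional $M\subseteq D(a_2)$, so passing to this smaller family can only increase the infimum. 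Second, on any such $M$ the hypothesis $a_1[u]\le a_2[u]$ gives $\sup_{u\in M,\|u\|=1}a_1[u]\le\sup_{u\in M,\|u\|=1}a_2[u]$. Combining the two,
$$
\mu_n(T_1)\ \le\ \inf_{\substack{M\subseteq D(a_2)\\ \dim M=n}}\sup_{\substack{u\in M\\ \|u\|=1}}a_1[u]\ \le\ \inf_{\substack{M\subseteq D(a_2)\\ \dim M=n}}\sup_{\substack{u\in M\\ \|u\|=1}}a_2[u]\ =\ \mu_n(T_2).
$$

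It then remains to translate this monotonicity into the stated inequalities. Because $\sigma_{\mathrm{ess}}(T_i)\subseteq[0,\infty)$ in the setting at hand, each negative value $\mu_n(T)$ is exactly an isolated negative eigenvalue of finite multiplicity, i.e.\ one of the numbers $\lambda_n(T)_-$; hence $\mu_n(T_1)\le\mu_n(T_2)$ yields $\lambda_n(T_1)_-\le\lambda_n(T_2)_-$ at every index for which the right-hand side is negative. For the sum I would write $N_i$ for the number of negative eigenvalues of $T_i$, note that $N_1\ge N_2$ since $\mu_n(T_2)<0$ forces $\mu_n(T_1)<0$, and observe that the extra eigenvalues $\lambda_n(T_1)_-$ with $N_2<n\le N_1$ are themselves negative, so
$$
\sum_{n=1}^{N_1}\lambda_n(T_1)_-\ \le\ \sum_{n=1}^{N_2}\lambda_n(T_1)_-\ \le\ \sum_{n=1}^{N_2}\lambda_n(T_2)_-,
$$
which is \eqref{est8}. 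The whole argument is a standard application of min--max, and the only points demanding care---the closest thing to an obstacle---are bookkeeping ones: making sure the minimising infimum is genuinely taken over subspaces of the \emph{form} domain so that the inclusion $D(a_2)\subseteq D(a_1)$ is invoked correctly, confirming that the relevant variational values lie below the essential spectrum and are therefore true eigenvalues, and accommodating the possible mismatch $N_1\ne N_2$ in the ranges of summation.
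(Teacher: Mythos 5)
The paper offers no proof of this lemma at all: it is quoted as a known result from Kato's book \cite{Kato} (``The following result can be found for example in \cite{Kato}''), so there is nothing internal to compare your argument against. Your proposal is a correct, self-contained proof of the quoted fact, and it is the standard one: the order $T_1\le T_2$ between lower semi-bounded self-adjoint operators is indeed \emph{defined} through the associated closed forms, and your two-step comparison of the variational values $\mu_n$ --- first enlarging the family of trial subspaces using $D(a_2)\subseteq D(a_1)$, then comparing $a_1$ and $a_2$ on each fixed subspace --- is exactly the Courant--Fischer argument behind Kato's theorem. The bookkeeping for the sums ($N_1\ge N_2$, the extra eigenvalues of $T_1$ being negative, hence discarding them only increases the sum) is also right. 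What your write-up buys over the bare citation is that it exposes the hypotheses the lemma uses silently: the ``consequently'' part needs the negative variational values to be genuine discrete eigenvalues, i.e.\ $\sigma_{\mathrm{ess}}(T_i)\subseteq[0,\infty)$, which holds in the paper's application but is not stated in the lemma.

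One caveat your proof makes visible, and which you should flag explicitly: your argument (and the inequality $\lambda_n(T_1)_-\le\lambda_n(T_2)_-$ as literally written) requires reading $\lambda_n(T)_-$ as the negative eigenvalues themselves, i.e.\ as negative real numbers. In Lemma \ref{LT}, however, the same subscript must denote the negative \emph{part} $\max\{0,-\lambda_n\}$, since there the quantities are raised to the power $\kappa$ and bounded by a nonnegative integral. Under that convention the conclusion of the present lemma reverses: $T_1\le T_2$ gives $\lambda_n(T_1)_-\ge\lambda_n(T_2)_-$. So your proof is correct for the statement as written, but it brings out a sign-convention mismatch in how the paper chains this lemma with the Lieb--Thirring bound \eqref{est7} in the proof of Theorem \ref{thm}, where the comparison operator $P'$ satisfies $q'\le p'$ and yet an upper bound for the (nonnegative) eigenvalue sum of $Q'$ is claimed; with consistent conventions that step needs a comparison operator lying \emph{below} $Q'$, not above it.
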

 
 \begin{theorem}\label{thm}
 Let $\{\lambda_n(Q')_-\}$ be a nondecreasing sequence of negative eigenvalues of the operator in \eqref{curvedop}. Then there is a constant $C_{11}>0$ depending on the curvature $k$ such that 
 \begin{equation}\label{est9}
 \sum_n\lambda_n(Q')_-\le C_{17}\|V\|^2_{L^2(\Omega)}, \,\,\forall V\ge 0.
 \end{equation}
 \end{theorem}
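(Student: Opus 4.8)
The plan is to transfer the two-dimensional Lieb--Thirring bound for the straight waveguide, namely \eqref{est7} (equivalently Lemma \ref{LT} with $\kappa=1$, $n=2$), to the curved setting by a single form comparison, in exactly the same spirit in which the counting estimate of Theorem \ref{mr2} was reduced to a model operator. The starting point is the lower bound \eqref{curveform}, which is a direct consequence of the uniform ellipticity \eqref{elliptic} of the metric tensor guaranteed by assumption (A): for every admissible $f$ one has $q'[f]\ge q'_0[f]$, where $q'_0$ is the quadratic form of the operator \eqref{newoperator}, that is $-\Delta-\lambda'_1-V'$ on $L^2(\Omega,dsdu)$, with rescaled potential $V'=\frac{1+d\|k_-\|_\infty}{m}\,V=:c\,V$ and $c=c(k)\ge 1$ a constant determined solely by the curvature bounds $\|k_\pm\|_\infty$. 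Since the two forms share the same geometry-independent form domain and $q'_0\le q'$ there, Lemma \ref{Kato} applies and gives the operator inequality $Q'_0\le Q'$. By the min--max principle each negative eigenvalue of $Q'$ then sits no lower than the corresponding eigenvalue of $Q'_0$, hence has no larger magnitude, so that
$$
\sum_n\lambda_n(Q')_-\le\sum_n\lambda_n(Q'_0)_-.
$$

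It remains to estimate the right-hand side. The reduced operator $Q'_0$ is of the same straight-waveguide type as $Q$ in \eqref{wg}: it is a one-dimensional longitudinal operator plus a non-negative transverse part (whose threshold is now realised by the rescaled transverse mode $\sin\frac{\pi l}{2d}u$ with $l^2=\frac{1+d\|k_-\|_\infty}{m}$) minus a potential. Consequently the derivation that produced \eqref{est7} from the two-dimensional Lieb--Thirring inequality applies to $Q'_0$ with $V$ replaced by $V'=cV$, yielding
$$
\sum_n\lambda_n(Q'_0)_-\le C_{16}\,\|V'\|^2_{L^2(\Omega)}=C_{16}\,c^2\,\|V\|^2_{L^2(\Omega)}.
$$
Combining the two displays and setting $C_{17}:=C_{16}\,c^2$, which depends on $k$ only through $\|k_\pm\|_\infty$ (and $d$), gives \eqref{est9}.

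The genuine obstacle is the second step, and it is a matter of bookkeeping with the transverse threshold rather than the comparison itself. The ellipticity reduction simultaneously contracts the kinetic term by the factor $m\le 1$ and inflates the mass term, so that the effective transverse threshold of $Q'_0$ is $\lambda'_1\ge\frac{\pi^2}{4d^2}$; one must verify that this raised threshold does not spoil the $\|V'\|_{L^2}^2$-control of the negative spectrum. The clean way to handle this is to keep the threshold subtraction $-\lambda'_1\|f\|^2$ tied to the transverse kinetic energy, estimated by the Poincaré-type inequality for the rescaled transverse mode, so that the transverse channel of $Q'_0$ remains non-negative and only a bona fide two-dimensional Schrödinger operator $-\Delta-V'$ (with the threshold already absorbed) is left for Lemma \ref{LT}. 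Arranging this matching so that no residual negative constant survives—equivalently, so that the essential spectrum of the reduced operator still begins at $0$—is the main technical point; once it is carried out, the scaled potential $V'=cV$ feeds directly into the two-dimensional Lieb--Thirring inequality and the stated curvature dependence of $C_{17}$ follows.
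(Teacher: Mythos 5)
Your first step is sound, and it is in fact the only correct direction of comparison: to bound $\sum_n\lambda_n(Q')_-$ from above (with $\lambda_-$ denoting the magnitude of a negative eigenvalue, the convention forced by the powers in Lemma \ref{LT} and by the positivity of the right-hand side of \eqref{est9}), one must compare $Q'$ with an operator \emph{below} it, exactly as you do via \eqref{curveform}; note that in doing so you have silently reversed inequality \eqref{est8} of Lemma \ref{Kato} as printed, whose stated direction is wrong for negative parts. The genuine gap is your second step, which you correctly flag as ``the main technical point'' but do not close --- and it cannot be closed. The reduced operator \eqref{newoperator} has threshold $\lambda'_1=\frac{\pi^2 l^2}{4d^2}$ with $l^2=\frac{1+d\|k_-\|_\infty}{m}>1$ whenever $k\not\equiv 0$, so $Q'_0$ is \emph{not} of the form \eqref{wg}: the excess $\lambda'_1-\frac{\pi^2}{4d^2}$ is a positive constant on the infinite strip, hence not in $L^2(\Omega)$, and \eqref{est7} cannot absorb it. Worse, $\sigma_{\mathrm{ess}}(Q'_0)$ begins at $\frac{\pi^2}{4d^2}-\lambda'_1<0$. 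Already for $V\equiv 0$ the operator $Q'_0=-\Delta-\lambda'_1$ is translation invariant in $s$, so its spectrum is purely essential and it has \emph{no} eigenvalues at all: there is no ``corresponding eigenvalue of $Q'_0$'' for min--max to compare with, and your displayed inequality degenerates either to the false assertion $\sum_n\lambda_n(Q')_-\le 0$ (if the right-hand sum runs over discrete eigenvalues, of which there are none) or to a vacuous bound by $+\infty$ (if it is read as the trace of the negative part, which is infinite because the negative spectrum is continuous). The falsity of the first reading is not hypothetical: Dirichlet--Neumann strips are known to possess curvature-induced bound states below $\frac{\pi^2}{4d^2}$ for one orientation of the bending even with $V\equiv 0$, cf.\ \cite{KJ}. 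The eigenvalue-sum comparison is legitimate only when the smaller operator has non-negative essential spectrum --- precisely the property your reduction destroys.

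The repair you sketch (``arrange the threshold bookkeeping so that no residual negative constant survives'') cannot be carried out within this approach. After the factor-$m$ ellipticity reduction, the transverse channel obeys only $m\int_{\Omega}|\partial_u f|^2\,dsdu\ge m\frac{\pi^2}{4d^2}\int_{\Omega}|f|^2\,dsdu$, which falls short of the subtracted term $\frac{\pi^2}{4d^2}(1+d\|k_-\|_\infty)\int_{\Omega}|f|^2\,dsdu$ by the strictly negative amount $\frac{\pi^2}{4d^2}\bigl(m-1-d\|k_-\|_\infty\bigr)\int_{\Omega}|f|^2\,dsdu$; no rearrangement removes it, and structurally it cannot be removed, since any $s$-translation-invariant form lying below $q'$ must be negative somewhere whenever $q'$ with $V\equiv 0$ is, and a translation-invariant operator that is negative somewhere has negative essential spectrum. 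It is worth contrasting this with the paper's own proof, which runs the comparison the other way ($q'\le p'$, inequality \eqref{est10}): there the comparison operator has threshold $\lambda^*_1\le\frac{\pi^2}{4d^2}$, so \eqref{est7} genuinely applies to it, but the direction of the eigenvalue comparison is then the wrong one --- $Q'\le P'$ gives $\lambda_n(Q')\le\lambda_n(P')$, hence $\lambda_n(Q')_-\ge\lambda_n(P')_-$, so the sum for $Q'$ is bounded from \emph{below}, not above, by that for $P'$. Your attempt and the paper's proof thus fail at complementary steps, and the geometrically induced bound states mentioned above indicate that no argument based on this crude uniform-ellipticity comparison can deliver \eqref{est9}.
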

 \begin{proof}
 By \eqref{elliptic} and \eqref{newform}, we have that
\begin{eqnarray}\label{est10}
q'[f]&\le&\frac{1}{1-d\|k_+\|_{\infty}}\int_{\Omega}|\partial_sf|^2dsdu + (1+d\|k_-\|_{\infty})\int_{\Omega}|\partial_uf|^2dsdu \nonumber\\ &-&\frac{\pi^2}{4d^2}(1-d\|k_+\|_{\infty})\int_{\Omega}|f|^2dsdu - (1-d\|k_+\|_{\infty})\int_{\Omega}V|f|^2dsdu \nonumber \\
&\le& \max\left\{\frac{1}{1-d\|k_+\|_{\infty}},1+d\|k_-\|_{\infty}\right\}\left(\int_{\Omega}|\partial_sf|^2dsdu + \int_{\Omega}|\partial_uf|^2dsdu\right)\nonumber\\
&-&\frac{\pi^2}{4d^2}(1-d\|k_+\|_{\infty})\int_{\Omega}|f|^2dsdu - (1-d\|k_+\|_{\infty})\int_{\Omega}V|f|^2dsdu\nonumber\\
&=& M\left(\int_{\Omega}|\partial_sf|^2dsdu + \int_{\Omega}|\partial_uf|^2dsdu
-\frac{\pi^2}{4Md^2}(1-d\|k_+\|_{\infty})\int_{\Omega}|f|^2dsdu\right)\nonumber\\ &-&\frac{1}{M} (1-d\|k_+\|_{\infty})\int_{\Omega}V|f|^2dsdu,
\end{eqnarray}
where 
$$
M:=\max\left\{\frac{1}{1-d\|k_+\|_{\infty}},1+d\|k_-\|_{\infty}\right\}.
$$

The right hand side of \eqref{est10} gives a quadratic form associated with the lower semi-bounded self-adjoint operator
\begin{equation}\label{newoperator'}
P':=-\Delta - \lambda^*_1-V^*,\,\,V^*\ge 0
\end{equation}
on $L^2(\Omega, dsdu)$, where $\lambda^*_1= \frac{\pi^2}{4Md^2}(1-d\|k_+\|_{\infty})$ and $V^*= \frac{V(1-d\|k_+\|_{\infty})}{M}$.\\
If $p'$ is the quadratic form corresponding to \eqref{newoperator'}, then
$$
q'[f] \le p'[f].
$$
Consequently, \eqref{est8} gives
\begin{equation}\label{est11}
 \sum_n\lambda_n(Q')_- \le \sum_n\lambda_n(P')_-.
 \end{equation}
 Using \eqref{est7}, one has
 $$
 \sum_n\lambda_n(P')_- \le C_{16}\frac{(1-d\|k_+\|_{\infty})^2}{M^2}\|V\|^2_{L^2(\Omega)}.
 $$
 Hence \eqref{est9} follows with
 $$
 C_{17} = C_{16}\frac{(1-d\|k_+\|_{\infty})^2}{M^2}.
 $$
 \end{proof}
 
 In \eqref{est9}, the series on the left hand side converges for any $0\le V\in L^2(\Omega)$ with the curvature contributing greatly by inducing more eigenvalues than those in the straight case. Therefore the motion of the quantum particle is stable and the particle remains confined within a certain for a significant amount of time. Theorem \ref{thm} can easily be generalized by the dual-Thirring inequality for any $L^2$-orthonormal sequence in the domain of the quadratic form $q'$ where the optimal estimating constants are in a one-to-one correspondence, see for example {\rm \cite[Theorem 7.12]{Rup}}.
 \section{Conflicts of interest}
 The authors declare no conflicts of interest.


\begin{thebibliography}{1}
\bibitem{BE} Balinsky A A., and Evans W D., {\em Spectral Analysis of Relativistic Operators.} Imperial College Press, London, 2011.

\bibitem{Bal}Balinsky A A., Evans W D., and Lewis R T., {\em Analysis and Geometry of Hardy Inequality}, Spinger, Cham, 2015.

\bibitem{Boris}Boris D., and Exner P., Exponential splitting of bound states in a waveguide with a pair of distinct windows, {\em Journal of Physics A: Methamatical and General,} \textbf{37} (2004), 3411--3428. http://doi.org/10.1088/0305-4470/37/10/007

\bibitem{Borr}Borrelli W., Briet P., Krej\v{c}i\v{r}\'{i}k D., and Ourmi\`{e}res T., Spectral properties of relativistic quantum waveguides, {\em Annals Henri Poincar\`{e}}, \textbf{23}, (2022), 4069--4114. https://doi.org/10.1007/s00023-022-01179-9 

\bibitem{BirSol} Birman M Sh., and Solomyak M Z.,
{\em Spectral Theory of Self-Adjoint Operators in Hilbert Space.} Kluwer, Dordrecht etc., 1987.

\bibitem{ED} Eyvazov E H., and  Oriyov, D H. On negative spectrum of the Schr\"{o}dinger operator, {\em Proceedings of the Institute of Mathematics and Mechanics, National Academy of Sciences of Azerbaijan,} \textbf{45}, 2 (2020), 171--179. https://doi.org/10.29228/proc.26

\bibitem{Ditt} Dittrich J., and Kriz J., Bound states in straight quantum waveguides with conmbined boundary conditions, {\em Journal of Mathematical Physics,} \textbf{48}, 8 (2002), 3892--3915. https://doi.org/10.1063/1.1491597

\bibitem{Ed} Eden A., and Foias C., A simple proof of the generalized Lieb-Thirring inequalities in one-space dimension, {\em Journal of Mathematical Analysis and Applications,} \textbf{162} 1 (1991), 250--254.
    
\bibitem{EGST} Exner P., Grawlista R., \v{S}eba P., and Tater M., Point interactions in a strip {Annals of Physics}, \textbf{252}, 1 (1996), 133--179. https://doi.org/10.1006/aphy.1996.0127 

\bibitem{EN}Exner P., and N\v{e}mcov\'{a} K., Quantum mechanics with a finite number of point interactions, {\em Journal of Mathematical Physics,} \textbf{43}, 3 (2002), 1152--1184. https://doi.org/10.1063/1.1431710
    
\bibitem{ES} Exner P., and \v{S}eba P., Bound states in curved quantum waveguides, {\em Journal of Mathematical Physics,} \textbf{30}, 11 (1989), 2574--2589. https://doi.org/10.1063/1.528538

\bibitem{ET} Exner P., and Ichinose T., Geometrically induced bound states in curved leaky wires, {\em Journal of Physics A: Mathematical General,} \textbf{34}, 7 (2001), 1439--1450. http://www.iop.org/Journals/ja PII: S0305-4470(01)19467-9
    
\bibitem{EV}Exner P., and Vugalter S A., Asymptotic estimates for bound states in quantum waveguides coupled laterally though a narrow window, {\em Annales de l’Institut Henri Poincaré,} \textbf{65}, 1 (1996), 109--123.

\bibitem{Grig} Grigor'yan A., and  Nadirashvili N., Negative eigenvalues of two-dimensional
Schr\"odinger operators, {\it Archive for Rational Mechanics and Analysis}, \textbf{217} (2015), 975--1028. https://doi.org/10.1007/S00205-015-0848-Z

\bibitem{Gol} Goldstone J., and Jaffe R L., Bound states in twisting tubes, {\em Physical Review B,} \textbf{45}, 24 (1992), 14100--14107.

\bibitem{Jil} J\'{i}lek M., Straight quantum waveguides with Robin boundary conditions, {\em Symmetry, Integrability and Geometry: Methods and Applications}, \textbf{3}, 108 (2007), 12 pages. http://www.emis.de/journals/SIGMA/2007/108/
    
\bibitem{Kar1} Karuhanga M., Eigenvalue bounds for a class of Schr\"{o}dinger operators in a strip, {\em Journal of Mathematics}, \textbf{2018} (2018), 8 pages.
https://doi.org/10.1155/2018/7172356

\bibitem{MK} M. Karuhanga, On the discrete spectrum of Schrödinger operators with Ahlfors regular potentials in a strip, {\em Journal of Mathematical Analysis and Applications}, \textbf{475}, 1 (2019) 918--938. https://doi.org/10.1016/j.jmaa.2019.03.007
    
\bibitem{Kato} Kato T., {Pertubation Theory for Linear Operators.}  Springer, Berlin Heidelberg, 2013.
    
\bibitem{Klin} Klingenberg W., {\em A course in Differential Geometry.} Springer-Verlag, New York, 1978.

\bibitem{KJ} Krej$\check{c}$i$\check{r}\acute{i}$k D., and K$\check{r}\acute{i}\check{z}$ J., On the spectrum of curved planar waveguides, {\em Publications - RIMS, Kyoto University, } \textbf{41} (2005), 757--791.
    
\bibitem{Krez} Kreyszig E., {\em Differential Geometry.} University of Toronto Press, Toronto, 1959.

\bibitem{GT} Teschl G., {\em Mathematical Methods in Quantum Mehanics. With Applications to Schr\"{o}dinger Operators.} American Mathematical Society, Providence, 2009.

\bibitem{Roz} Rozenblum G V., The distribution of the discrete spectrum for singular differential operators, {\em Doklady Akademii Nauk: Archive}, \textbf{202} (1972), 1012--1015. https://www.mathnet.ru/eng/dan/v202/i5/p1012

\bibitem{Rup} Rupert L F., Laptev A., and Wiedl T., {\em Schr\"{o}dinger Operators: Eigenvalues and Lieb-Thirring Inequalities.} Cambridge University Press, 2023.

\bibitem{Eugene}  Shargorodsky E., On negative eigenvalues of two-dimensional Schr\"odingers operators, {\em  Proceedings of London Mathematical Society}, \textbf{108}, 2 (2014), 441--483. https://doi:10.1112/plms/pdt036

\bibitem{Sol}  Solomyak M., Piecewise-polynomial approximation of functions from $H\sp \ell((0,1)\sp d)$, $2\ell=d$, and applications to the spectral theory of the Schr\"odinger operator, {\it Israel Journal of Mathematics},  \textbf{86}, 1-3 (1994), 253--275. https://doi.org/10.1007/BF02773681

\bibitem{Sor} Sorowen B., and Karuhanga M., Eigenvalue estimates for magnetic Schr\"{o}dinger operator in a waveguide, {\em Gulf Journal of Mathematics}, \textbf{11}, 1 (2021) 65-72.




\end{thebibliography}
\end{document}